\newtheorem{definition}{Definition}[section]
\newtheorem{lemma}[definition]{Lemma}
\newtheorem{theorem}[definition]{Theorem}
\newtheorem{proposition}[definition]{Proposition}
\newtheorem{remark}[definition]{Remark}
\newcommand{\Romannum}[1]{\uppercase\expandafter{\romannumeral #1}}
\numberwithin{equation}{section}
\newcommand\keywordsname{Key words}
\newcommand\AMSname{AMS subject classifications}
\newenvironment{@abssec}[1]{%
     \if@twocolumn
       \section*{#1}%
     \else
       \vspace{.05in}\footnotesize
       \parindent .2in
         {\upshape\bfseries #1. }\ignorespaces
     \fi}
     {\if@twocolumn\else\par\vspace{.1in}\fi}
\begin{document}

\title{On the exponent set of nonnegative primitive tensors
\footnote{P. Yuan's research is supported by the NSF of China (Grant No. 11271142) and
the Guangdong Provincial Natural Science Foundation(Grant No. S2012010009942),
L. You's research is supported by the Zhujiang Technology New Star Foundation of
Guangzhou (Grant No. 2011J2200090) and Program on International Cooperation and Innovation, Department of Education,
Guangdong Province (Grant No.2012gjhz0007).}}
\author{ Zilong He\footnote{{\it{Email address:\;}}hzldew@qq.com.}
 \qquad Pingzhi Yuan\footnote{{\it{Corresponding author:\;}}yuanpz@scnu.edu.cn.}
 \qquad Lihua You\footnote{{\it{Email address:\;}}ylhua@scnu.edu.cn.}
 }
\vskip.2cm
\date{{\small
School of Mathematical Sciences, South China Normal University,\\
Guangzhou, 510631, P.R. China\\
}}
\maketitle

\begin{abstract} In this paper, we present a necessary and sufficient condition for a nonnegative  tensor to be a primitive one,
 show that the exponent set of nonnegative primitive tensors with order $m(\ge n)$ and dimension $n$ is $\{k| 1\le k\le (n-1)^2+1\}. $
\vskip.2cm \noindent{\it{AMS classification:}} 05C50;  15A69
 \vskip.2cm \noindent{\it{Keywords:}}  tensor; the exponent set; primitive tensor; primitive degree.
\end{abstract}

\section{Introduction}
\hskip.6cm An $n\times n$ nonnegative square matrix $A= (a_{ij})$ is nonnegative primitive (or simply, primitive) if $A^k>0$ for
some positive integer $k$. The least such $k$ is called the primitive exponent (or simply, exponent) of $A$ and is
denoted by $\gamma(A)$.

Let $a,b,n$ be positive integers with $b>a$, $[a,b]^o=\{k | k \mbox { is an integer and } a\leq k\leq b\}$,
$[n]=[1,n]^o=\{1,2,\ldots, n\}$,
 $E_n=\{k | \mbox { there exists a   primitive matrix }  A \mbox { of order } n \mbox { such that } $ $\gamma(A)=k\}.$

   In 1950, H. Wielandt \cite{Wi59} first stated the sharp upper bound for
$\gamma(A)$, that is, $\gamma(A)\le w_n = (n-1)^2+1$ for all  $n\times n$  primitive matrices and showed that $E_n\subseteq [1,w_n]^o$.
In 1964, A. L. Dulmage and N. S. Mendelsohn \cite{DM64}  revealed the so-called gaps in
the exponent set of  primitive matrices, that is, $E_n \subset [1,w_n]^o$,
where  ``gap" is a set  of consecutive integers $[a,b]^o(\subset [1,w_n]^o)$,  such that no  $n\times n$ matrix $A$ satisfying $\gamma(A)\in [a,b]^o$.
In 1981, M. Lewin and Y. Vitek \cite{LV81}  found all gaps in $[\lfloor\frac{1}{2}w_n\rfloor+1, w_n]^o$,
and  conjectured that $[1, \lfloor\frac{1}{2}w_n\rfloor]^o$ has no gaps,
where $\lfloor x\rfloor$ denotes the greatest integer $\le x$.
 In 1985, Shao  \cite{Sh85} proved that this Lewin-Vitek Conjecture is true for all sufficiently large $n$,
and the conjecture has one counterexample when $n=11$ since $48\not\in E_{11}$.
Finally, in 1987, Zhang \cite{Zh87} continued and completed the work. He  showed that the  Lewin-Vitek Conjecture holds for all $n$
 except $n=11$. Thus the  exponent set $E_n$ for primitive matrices of order $n$ is completely determined.

Since the work of Qi \cite{Qi05} and Lim \cite{Li05}, the study  of tensors and the spectra of tensors (and hypergraphs) and their various applications have attracted much attention and interest.  In \cite{Ch1} and \cite{Ch2}, Chang et al.  defined the irreducibility of tensors, the primitivity of nonnegative tensors, and extended many important properties of (nonnegative) primitive matrices to (nonnegative) primitive tensors. As an application of the general tensor product defined by Shao \cite{Sh12}, Shao presented a simple characterization of the primitive tensors in terms of the zero pattern of the powers of  $\mathbb{A}$. He also proposed a conjecture on the primitive degree $\gamma(\mathbb{A})$. Recently,  the authors \cite{YHY13} confirmed the conjecture of Shao by proving Theorem \ref{thm11}.

\begin{theorem}\label{thm11}{\rm (\cite{YHY13} Theorem 1.2)}
Let $\mathbb{A}$ be a nonnegative primitive tensor with order $m$ and dimension $n$.
Then its primitive degree $\gamma(\mathbb{A})\le (n-1)^2+1$, and the upper bound is sharp. \end{theorem}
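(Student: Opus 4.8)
The plan is to establish the two assertions of the theorem separately: the inequality $\gamma(\mathbb{A})\le (n-1)^2+1$ for every nonnegative primitive tensor $\mathbb{A}$ of order $m$ and dimension $n$, and then its sharpness, by producing for each $n$ a tensor that attains it.

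For the upper bound I would first turn ``$\mathbb{A}^k>0$'' into a combinatorial statement. Iterating Shao's general product, $(\mathbb{A}^k)_{i\alpha}>0$ with $\alpha\in[n]^{(m-1)^k}$ means exactly that the complete $(m-1)$-ary tree of depth $k$, rooted at $i$ and with its leaves labelled by the entries of $\alpha$, admits a \emph{filling}: a labelling of all internal nodes by elements of $[n]$, the root receiving $i$, such that every node labelled $j$ has its $m-1$ children labelled by the entries of some tuple $(i_2,\dots,i_m)$ with $a_{ji_2\cdots i_m}>0$. Thus $\mathbb{A}^k>0$ is equivalent to ``every depth-$k$ tree, every root, can be filled'', a property that is readily seen to be monotone in $k$ (use that a primitive tensor has no zero row), so $\gamma(\mathbb{A})$ is the least such $k$. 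I would then record the structural consequences of primitivity: the associated digraph $D(\mathbb{A})$ on $[n]$ (arc $i\to j$ whenever $j$ occurs among $i_2,\dots,i_m$ for some positive $a_{ii_2\cdots i_m}$) must itself be primitive in the ordinary sense --- strongly connected, cycle lengths of gcd $1$ --- since otherwise already the monochromatic trees fail; and the monotone operator $\Phi(S)=\{\,i: \exists\,(i_2,\dots,i_m)\in S^{m-1},\ a_{ii_2\cdots i_m}>0\,\}$ must eventually take every singleton to $[n]$. I would quantify these facts the same way one does for matrices: diameter of $D(\mathbb{A})$ at most $n-1$, shortest cycle at most $n$, and closed walks of every sufficiently large length at a fixed vertex, by the Schur--Frobenius estimate behind Wielandt's original bound.

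The core of the proof is then to assemble, for an arbitrary root $i$ and an arbitrary leaf-labelling, a filling of the depth-$K$ tree with $K=(n-1)^2+1$, by combining a short ``routing'' phase with the Schur--Frobenius counting exactly as in the matrix argument, the diameter contributions being absorbed rather than simply added. The step I expect to be the real obstacle is reconciling this with the branching of the tree: a walk in $D(\mathbb{A})$ routes a single leaf, whereas a filling must route all $(m-1)^K$ leaves at once, so the positive entries chosen along the routing have to branch out so as to cover every label demanded in the subtree beneath them, all while spending at most $K$ levels on every branch. I expect this to force an induction on depth whose hypothesis is not ``$\mathbb{A}^k>0$'' but the sharper ``from every root, every leaf-labelling using a prescribed colour set can be filled'', and it is exactly here that one must use the full primitivity of $\mathbb{A}$ --- which is strictly stronger than primitivity of $D(\mathbb{A})$ --- rather than only the digraph.

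For sharpness one cannot simply transplant Wielandt's extremal digraph: in a primitive tensor of order $m\ge 3$ the union $\bigcup_{j\in[n]}\{\,(i_2,\dots,i_m): a_{ji_2\cdots i_m}>0\,\}$ must be all of $[n]^{m-1}$ --- otherwise some sibling-tuple of leaves can never be produced by any node --- which already rules out the extreme sparseness of the Wielandt matrix. The construction must therefore distribute this covering mass among the $n$ rows so that the support still forces a cycle-length set, ideally $\{\,n-1,\,n\,\}$ with gcd $1$, leaving a Frobenius gap of size $(n-1)^2$, while creating no shorter cycle (in particular no loop). Granting such a $\mathbb{B}$, I would check, via the orbit of a distinguished singleton under $\Phi$ together with the congruence obstruction coming from the two cycle lengths, that the relevant monochromatic tree cannot be filled at depth $(n-1)^2$ but can at depth $(n-1)^2+1$; with the upper bound this yields $\gamma(\mathbb{B})=(n-1)^2+1$. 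I anticipate pinning down this extremal $\mathbb{B}$ to be the second main difficulty, as it has no exact counterpart in the matrix theory.
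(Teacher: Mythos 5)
The decisive problem is that you are working with the wrong notion of primitivity, and this error propagates into both halves of your argument. In this setting (Definition \ref{defn21}, Propositions \ref{pro22} and \ref{pro23}, Theorem \ref{thm34}) primitivity does \emph{not} mean $\mathbb{A}^k>0$ entrywise; it means $M(\mathbb{A}^k)>0$, i.e.\ positivity only of the entries $(\mathbb{A}^k)_{uj\cdots j}$, equivalently $T_{\mathbb{A}}^k(e_j)=x_j^{(k)}>0$ for every $j$. In your tree picture this is only the case of \emph{monochromatic} leaf labellings, not ``every leaf-labelling from every root.'' Your claimed necessary condition that $\bigcup_{j}\{(i_2,\dots,i_m):a_{ji_2\cdots i_m}>0\}=[n]^{m-1}$ is therefore false: the extremal tensor $\mathbb{A}_0$ of this paper is supported only on tuples with $i_2=\cdots=i_m$, has majorization matrix equal to the Wielandt matrix $M_1$, and is primitive with $\gamma(\mathbb{A}_0)=\gamma(M_1)=(n-1)^2+1$ (Lemmas \ref{lem42} and \ref{lem43}). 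So the sharpness example is exactly the ``transplanted Wielandt digraph'' that you argue cannot work, and the harder extremal construction you go looking for is unnecessary (and your proposed one is never actually exhibited).

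The same misreading undermines your upper-bound plan. With the correct definition, the whole problem reduces to iterating the monotone set map $\Phi(S)=\{u:\exists\,j_2,\dots,j_m\in S,\ a_{uj_2\cdots j_m}>0\}$ on singletons $\{j\}$ and bounding the first time the orbit reaches $[n]$ (this is precisely $\gamma_j(\mathbb{A})$, and $\gamma(\mathbb{A})=\max_j\gamma_j(\mathbb{A})$ by Proposition \ref{pro27}); note that $\Phi(\{j\})$ only sees the majorization matrix, since all of $j_2,\dots,j_m$ must equal $j$. You do write down the correct operator $\Phi$, but you then abandon it for the tree-filling problem with arbitrary leaf labellings, which is a strictly stronger requirement whose analysis would not prove the stated theorem, and the induction you sketch for it is not carried out. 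Finally, be aware that Theorem \ref{thm11} is imported from \cite{YHY13} and is not reproved in this paper; the argument there is a combinatorial analysis of the orbit of $\{j\}$ under $\Phi$, for which nothing like your covering condition is available.
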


Therefore, it is natural to consider the question to completely determine the exponent set of  primitive tensors with order $m$ and dimension $n$.
Let $m(\ge2), n$ be  positive integers and
$$E(m,\,n)=\{k | \mbox { there exists a primitive tensor } \mathbb{A} \mbox { of order } m \mbox{ and  dimension } n \mbox {  such that  }
  k=\gamma(\mathbb{A}) \}.$$

The main result of this paper is as follows.

\begin{theorem}\label{thm12} Let  $m, n$ be positive integers with $m\ge n\ge 3$. Then $E(m,\,n)=[1, (n-1)^2+1]^o$. \end{theorem}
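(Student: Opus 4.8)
Since Theorem~\ref{thm11} gives $\gamma(\mathbb{A})\le (n-1)^2+1$ for every primitive tensor of order $m$ and dimension $n$, and its sharpness part already puts $(n-1)^2+1$ into $E(m,n)$, the whole content of Theorem~\ref{thm12} is to show that every integer $k$ with $1\le k\le (n-1)^2$ is also realized. I would prove this by explicit construction, using the characterization of primitivity and of the primitive degree developed in this paper. It is convenient to phrase everything through the zero pattern of the powers: writing, for $i\in[n]$, $\mathcal{P}_t(i)$ for the set of multi-indices $\beta$ with $(\mathbb{A}^t)_{i\beta}>0$, one has $\mathbb{A}^t>0$ iff $\mathcal{P}_t(i)$ is the full index set for all $i$, and the $\mathcal{P}_t(i)$ obey the ``tree-unfolding'' recursion forced by the general product: a multi-index is realized from root $i$ in $t$ steps iff one can label the internal nodes of the complete $(m-1)$-ary tree of depth $t$, with root $i$ and leaves $\beta$, so that every node-with-its-children is a positive entry of $\mathbb{A}$. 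For each tensor I construct, the point is to prove the matching pair $\mathbb{A}^k>0$ (hence $\gamma(\mathbb{A})\le k$) and ``$\mathbb{A}^{k-1}$ has a zero entry'' (hence $\gamma(\mathbb{A})\ge k$).

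For the small range $1\le k\le n$ an elementary family suffices. Take $\mathbb{A}$ to be the all-ones tensor when $k=1$. For $2\le k\le n$, let $a_{i,j_1,\dots,j_{m-1}}>0$ exactly when either $1\le i\le k-1$ and every $j_\ell\in\{i,i+1\}$, or $k\le i\le n$ (all entries in that row positive). Tracking the recursion for the $\mathcal{P}_t$ one finds $\mathbb{A}^t>0\iff t\ge k$, i.e. $\gamma(\mathbb{A})=k$; here a convenient witness for $\mathbb{A}^{k-1}\not>0$ is a multi-index forcing the labels along a chain of the unfolding tree to climb by one per level, so that the root would need a label exceeding $1$ --- for $k<n$ the multi-index ``all coordinates $=n$'' already does this, and for $k=n$ one uses instead a multi-index demanding two widely separated values at sibling coordinates.

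For the large range $n<k\le (n-1)^2$ a path-plus-expansive-vertex picture cannot work, since distances in an $n$-vertex digraph are at most $n-1$. Instead I would start from the extremal tensor of Theorem~\ref{thm11} and deform it. Its positive entries may be read as encoding ``configurations of $n-1$ tokens placed on the path $1,2,\dots,n$'', an elementary step advancing a single token by one, so that the $(n-1)^2$ elementary steps needed to push every token from $1$ to $n$ produce the extremal exponent. One now shrinks this --- letting some tokens start partway along the path, or shortening the path, or allowing a step to advance two tokens at once --- which lowers the exponent; and because a hyperedge carries $m-1\ge n-1$ slots, there is exactly enough room to perform such shrinkings one unit at a time, so that the whole interval down to $n$ is swept out, with none of the numerical-semigroup gaps that occur for matrices. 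For each member of this family one again verifies $\mathbb{A}^k>0$ by writing down an explicit labeling of the unfolding tree, and $\gamma(\mathbb{A})>k-1$ by exhibiting a configuration that provably cannot be reached in $k-1$ steps.

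All of the tensors above make sense verbatim for every $m\ge n$, so the argument is uniform in $m$, and the hypothesis $m\ge n$ enters precisely through the room condition $m-1\ge n-1$ used in the large range. The main obstacle I expect is the systematic verification of the lower bounds $\gamma(\mathbb{A})\ge k$ throughout the large range: for each $k$ one must locate a specific entry of $\mathbb{A}^{k-1}$ that vanishes, and one must check that as the deformation parameter varies the exponent really decreases one step at a time --- with no accidental jump, and with the ranges of the two families overlapping --- which is exactly the place where the matrix-case gap phenomenon has to be seen to disappear.
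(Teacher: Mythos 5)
There is a genuine gap, and it starts with your criterion for the primitive degree. You propose to certify $\gamma(\mathbb{A})\le k$ by $\mathbb{A}^k>0$ and $\gamma(\mathbb{A})\ge k$ by ``$\mathbb{A}^{k-1}$ has a zero entry.'' The first implication is true but far too strong to be usable, and the second is simply false: by Proposition~\ref{pro23} and Theorem~\ref{thm34}, the primitive degree is the least $r$ with $M(\mathbb{A}^r)>0$, i.e.\ it is controlled only by the \emph{diagonal} entries $(\mathbb{A}^r)_{u,j,j,\dots,j}$, so a zero of $\mathbb{A}^{k-1}$ at a non-diagonal multi-index proves nothing. This is not a cosmetic issue: your own small-range family already breaks at $k=n$. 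For that tensor every row $u\le n-1$ satisfies ``$u\in S_{t+1}(\mathbb{A},j)$ iff $\{u,u+1\}\cap S_t(\mathbb{A},j)\ne\emptyset$'' while $n\in S_t(\mathbb{A},j)$ for all $t\ge 1$, so the worst column is $j=n$ with $S_t(\mathbb{A},n)=\{n-t,\dots,n\}$ and hence $\gamma(\mathbb{A})=n-1$, not $n$; the ``widely separated sibling values'' witness you invoke there is a non-diagonal zero and does not rescue the lower bound. (For $2\le k\le n-1$ your witness $(n,\dots,n)$ happens to be a diagonal multi-index, which is why those cases survive.)

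The second, larger problem is that the range $n<k\le(n-1)^2$ --- the actual content of the theorem --- is only a plan, not a construction. The paper does something quite concrete here: it takes $\mathbb{A}_0$ to be the ``matrix-like'' tensor with $M(\mathbb{A}_0)=M_1$ the Wielandt matrix, and for each $1\le k\le n^2-3n+2$ forms $\mathbb{A}_k$ by adding to every row a single positive entry at the multi-indices whose \emph{support set} equals $S_k(\mathbb{A}_0,n-1)$ (this is exactly where $m\ge n$ is used, since $|S_k|\le n-1\le m-1$). The verification then rests on three points your sketch does not supply: the added entries leave $M(\mathbb{A}_k)=M_1$ unchanged, so primitivity is automatic (Proposition~\ref{pro47}); the shortcut cannot fire before step $k$ because $S_1(\mathbb{A}_0,n-1),\dots,S_k(\mathbb{A}_0,n-1)$ are pairwise distinct, which is proved by the Frobenius/coin argument on the cycle lengths $n-1$ and $n$ (Propositions~\ref{pro44}, \ref{pro45}, \ref{pro49}); and the remaining $j$-primitive degrees are read off from the shift identity $S_1(\mathbb{A}_k,n-1)=S_2(\mathbb{A}_k,n-2)=\cdots=S_n(\mathbb{A}_k,n)$ together with $\gamma(\mathbb{A})=\max_j\gamma_j(\mathbb{A})$. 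Your ``token-shrinking'' deformation neither specifies the tensors nor explains why the exponent decreases by exactly one at each step, and you yourself flag the one-at-a-time verification and the disappearance of the matrix gaps as the open obstacle --- but that obstacle \emph{is} the theorem. The high-level strategy (explicit families, a split at $k=n$, the role of $m\ge n$ as room for a multi-index supporting $n-1$ values) does match the paper's, but as written the proposal neither has a correct lower-bound mechanism nor a construction for the upper range.
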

The above theorem shows that there are no gaps in tensor case when $m\ge n\ge3$. It is well-known that there exist gaps when $m=2$,
therefore it is an interesting open problem to determine whether there exist gaps when $3\le m<n$.

The arrangement of the paper is as follows. In Section 2, we will give some definitions and  properties.
In Section 3, we present a necessary and sufficient condition for a nonnegative  tensor to be a primitive one (Theorem \ref{thm34}).
In Section 4,  We prove the main result (Theorem \ref{thm12}).

\section{Preliminaries}
\hskip.6cm An order $m$ dimension $n$ tensor $\mathbb{A}= (a_{i_1i_2\cdots i_m})_{1\le i_j\le n \hskip.2cm (j=1, \ldots, m)}$ over the complex field $\mathbb{C}$ is a multidimensional array with all entries $a_{i_1i_2\cdots i_m}\in\mathbb{C}\, ( i_1, \ldots, i_m\in [n]=\{1, \ldots, n\})$. The majorization matrix  $M(\mathbb{A})$ of the tensor $\mathbb{A}$ is defined as  $(M(\mathbb{A}))_{ij}=
a_{ij\cdots j} (i, j\in[n])$ by Pearson \cite{Pe10}.

 Let $\mathbb{A}$ (and $\mathbb{B}$) be an order $m\ge2$ (and $k\ge 1$), dimension $n$ tensor, respectively. In \cite{Sh12}, Shao defines a general product  $\mathbb{A}\mathbb{B}$ to be the following tensor $\mathbb{D}$ of order $(m-1)(k-1)+1$ and dimension $n$:
$$ d_{i\alpha_1\cdots\alpha_{m-1}}=\sum\limits_{i_2, \ldots, i_m=1}^na_{ii_2\cdots i_m}b_{i_2\alpha_1}\cdots b_{i_m\alpha_{m-1}} \quad (i\in[n], \, \alpha_1, \ldots, \alpha_{m-1}\in[n]^{k-1}).$$

The tensor product  possesses a very useful property: the associative law (\cite{Sh12} Theorem 1.1).

In \cite{Ch2} and \cite{Pe102}, Chang et al. and Pearson define the primitive tensors as follows.

\begin{definition} {\rm (\cite{Ch2, Pe102}) \label{defn21}} Let $\mathbb{A}$ be a nonnegative  tensor with order $m$ and dimension $n$,
$x=(x_1, x_2, \ldots, x_n)^T\in\mathbb{R}^n$ a vector and $x^{[r]}=(x_1^r, x_2^r, \ldots, x_n^r)^T$. Define the map $T_\mathbb{A}$ from $\mathbb{R}^n$ to $\mathbb{R}^n$ as: $T_\mathbb{A}(x)=(\mathbb{A}x)^{[\frac{1}{m-1}]}$. If there exists some positive integer $r$ such that $T_\mathbb{A}^r(x)>0$ for all nonnegative nonzero vectors $x\in\mathbb{R}^n$, then $\mathbb{A}$ is called primitive and the smallest such integer $r$ is called the primitive degree of $\mathbb{A}$, denoted by $\gamma(\mathbb{A})$. \end{definition}

In \cite{Sh12}, Shao shows the following results and defines the primitive degree by using the properties of tensor product and the zero patterns.

\begin{proposition}\label{pro22}{\rm(\cite{Sh12} Theorem 4.1)}
A nonnegative tensor $\mathbb{A}$ is primitive if and only if there exists some positive integer $r$ such that $\mathbb{A}^r$ is essentially positive.
Furthermore, the smallest such $r$ is the primitive degree of $\mathbb{A}$, $\gamma(\mathbb{A})$.
\end{proposition}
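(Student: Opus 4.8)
The plan is to reduce the statement, which concerns the iterated \emph{nonlinear} map $T_\mathbb{A}$, to a purely combinatorial comparison of zero patterns, and then read off both the equivalence and the equality of exponents. The bridge is the claim that for every nonnegative vector $x$ the support of the iterate $T_\mathbb{A}^r(x)$ coincides with the support of $\mathbb{A}^r x$, where $\mathbb{A}^r$ is the $r$-th power of $\mathbb{A}$ under Shao's general product and $\mathbb{A}^r x$ denotes the tensor--vector product (treating $x$ as an order-$1$ tensor). Precisely, I would establish
\begin{equation*}
\sgn\big(T_\mathbb{A}^r(x)\big)=\sgn\big(\mathbb{A}^r x\big)\qquad\text{for all } x\ge 0 \text{ and } r\ge 1. \tag{$\ast$}
\end{equation*}
Since $\mathbb{A}\ge 0$ and $x\ge 0$, every quantity appearing is nonnegative, so $\sgn$ records exactly which components vanish.

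First I would prove $(\ast)$ by induction on $r$. For $r=1$, the definition $T_\mathbb{A}(x)=(\mathbb{A}x)^{[\frac{1}{m-1}]}$ together with $\mathbb{A}x\ge 0$ gives $(T_\mathbb{A}(x))_i>0$ if and only if $(\mathbb{A}x)_i>0$, which is $(\ast)$ for $r=1$. For the inductive step, write $T_\mathbb{A}^{r+1}(x)=T_\mathbb{A}(y)$ with $y=T_\mathbb{A}^r(x)\ge 0$; by the base case $\sgn(T_\mathbb{A}(y))=\sgn(\mathbb{A}y)$. Two observations finish the step. Because $\mathbb{A}$ is nonnegative, $(\mathbb{A}y)_i=\sum a_{i i_2\cdots i_m}y_{i_2}\cdots y_{i_m}$ is positive precisely when some monomial with $a_{i i_2\cdots i_m}>0$ has all of its $y$-factors positive; hence $\sgn(\mathbb{A}y)$ depends only on $\sgn(y)$, which by the induction hypothesis equals $\sgn(\mathbb{A}^r x)$. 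On the other hand, the associativity of the general product (\cite{Sh12} Theorem 1.1), applied with $x$ viewed as an order-$1$ tensor, yields $\mathbb{A}^{r+1}x=(\mathbb{A}\,\mathbb{A}^r)x=\mathbb{A}(\mathbb{A}^r x)$, whose support likewise depends only on $\sgn(\mathbb{A}^r x)$ by the same monomial argument. Since the two supports are computed from the identical data $\sgn(\mathbb{A}^r x)$ by the same rule, they agree, giving $\sgn(T_\mathbb{A}^{r+1}(x))=\sgn(\mathbb{A}^{r+1}x)$ and completing the induction.

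With $(\ast)$ in hand the proposition is immediate. By Definition \ref{defn21}, $\mathbb{A}$ is primitive with $\gamma(\mathbb{A})=r$ exactly when $r$ is the least positive integer for which $T_\mathbb{A}^r(x)>0$ for every nonnegative nonzero $x$. By $(\ast)$ this holds if and only if $\mathbb{A}^r x>0$ for every such $x$, that is, if and only if $\mathbb{A}^r$ is essentially positive; and the minimality of $r$ transfers verbatim. This proves both the equivalence and the identification of the smallest $r$ with $\gamma(\mathbb{A})$. The main obstacle is that $T_\mathbb{A}$ is genuinely nonlinear --- the $(m-1)$-st roots rule out any literal equality $T_\mathbb{A}^r(x)=\mathbb{A}^r x$ of \emph{values} --- so the entire argument must be carried out at the level of zero patterns; the technical heart is the verification that the support of $\mathbb{A}y$ is a function of the support of $y$ alone, which is exactly what lets the nonlinear iteration be collapsed, through associativity, into the single tensor power $\mathbb{A}^r$.
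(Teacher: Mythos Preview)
The paper does not supply its own proof of this proposition: it is quoted verbatim from \cite{Sh12} (Theorem 4.1) and used as a black box. So there is no in-paper proof to compare against.

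Your argument is correct and is in the spirit of Shao's approach. The key identity $\sgn\big(T_\mathbb{A}^r(x)\big)=\sgn\big(\mathbb{A}^r x\big)$ is exactly the right bridge: it isolates the purely combinatorial content of the nonlinear iteration. The two ingredients you invoke are legitimate here. First, for nonnegative $\mathbb{A}$ and nonnegative $y$, the support of $\mathbb{A}y$ is determined by the support of $y$ alone, since $(\mathbb{A}y)_i>0$ iff some $a_{ii_2\cdots i_m}>0$ with all $y_{i_t}>0$. Second, associativity $(\mathbb{A}\,\mathbb{A}^r)x=\mathbb{A}(\mathbb{A}^r x)$ is applicable because in Shao's theorem the middle factor $\mathbb{A}^r$ has order $(m-1)^r+1\ge 2$ and the right factor $x$ may have order $1$; this is the configuration in which \cite{Sh12} Theorem~1.1 is stated. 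The conclusion then follows exactly as you write, using that ``essentially positive'' means $\mathbb{B}x>0$ for every nonnegative nonzero $x$.

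One small stylistic point: in the inductive step you could simply say $\sgn(\mathbb{A}y)=\sgn(\mathbb{A}z)$ where $z=\mathbb{A}^r x$, because $\sgn(y)=\sgn(z)$ by the induction hypothesis and the support map $y\mapsto\sgn(\mathbb{A}y)$ factors through $\sgn(y)$; then associativity gives $\mathbb{A}z=\mathbb{A}^{r+1}x$ directly. This is what you do, but phrasing it as ``the same rule applied to the same data'' slightly obscures that you are in fact comparing $\mathbb{A}y$ with $\mathbb{A}z$ for two \emph{different} vectors having the same support.
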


In \cite{YHY13}, the authors prove some necessary conditions for a nonnegative tensor to be a primitive one, and they  also prove the following result.

\begin{proposition}\label{pro23}{\rm (\cite{YHY13} Remark 2.6)}
Let $\mathbb{A}$ be a nonnegative  tensor with order $m$ and dimension $n$. Then $\mathbb{A}$ is primitive if and only if there exists some positive integer $r$ such that $M(\mathbb{A}^r)>0.$ Furthermore, the smallest such $r$ is the primitive degree of $\mathbb{A}$, $\gamma(\mathbb{A})$.
\end{proposition}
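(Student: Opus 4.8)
The plan is to deduce Proposition \ref{pro23} from Shao's characterization in Proposition \ref{pro22}, so that the entire task reduces to one elementary observation: for a nonnegative tensor $\mathbb{B}$ of order $k\ge 2$ and dimension $n$, being essentially positive is equivalent to $M(\mathbb{B})>0$. Applying this with $\mathbb{B}=\mathbb{A}^r$ (a nonnegative tensor of order $(m-1)^r+1\ge 2$ and dimension $n$) converts the condition ``$\mathbb{A}^r$ is essentially positive'' into ``$M(\mathbb{A}^r)>0$'', and since the equivalence will hold for each $r$ separately, the two notions of smallest exponent will automatically coincide.

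First I would establish this core equivalence. Recall that $\mathbb{B}$ essentially positive means $\mathbb{B}x>0$ for every nonzero nonnegative $x\in\mathbb{R}^n$, where $(\mathbb{B}x)_i=\sum_{i_2,\ldots,i_k} b_{ii_2\cdots i_k}x_{i_2}\cdots x_{i_k}$. For the forward direction I would test essential positivity on the standard basis vectors $x=e_j$: since $(e_j)_l=\delta_{lj}$, the only surviving summand in $(\mathbb{B}e_j)_i$ is the one with $i_2=\cdots=i_k=j$, so $(\mathbb{B}e_j)_i=b_{ij\cdots j}=(M(\mathbb{B}))_{ij}$. Hence $\mathbb{B}e_j>0$ forces the $j$-th column of $M(\mathbb{B})$ to be positive, and ranging over $j\in[n]$ gives $M(\mathbb{B})>0$.

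For the converse I would suppose $M(\mathbb{B})>0$ and take any nonzero $x\ge 0$, say $x_p>0$ for some $p\in[n]$. Since $\mathbb{B}$ is nonnegative, every summand in $(\mathbb{B}x)_i$ is nonnegative, so I may bound $(\mathbb{B}x)_i$ from below by the single term whose trailing indices all equal $p$: $(\mathbb{B}x)_i\ge b_{ip\cdots p}\,x_p^{\,k-1}=(M(\mathbb{B}))_{ip}\,x_p^{\,k-1}>0$. Thus $\mathbb{B}x>0$ for every nonzero nonnegative $x$, i.e.\ $\mathbb{B}$ is essentially positive, completing the equivalence. Combining this with Proposition \ref{pro22} applied to each $\mathbb{A}^r$ yields that $\mathbb{A}$ is primitive iff $M(\mathbb{A}^r)>0$ for some $r$, with the least such $r$ equal to $\gamma(\mathbb{A})$.

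The only genuinely delicate point lies upstream, in the passage from Definition \ref{defn21} to Proposition \ref{pro22}: one must verify that the componentwise roots $(\cdot)^{[1/(m-1)]}$ taken at each application of $T_\mathbb{A}$ never alter the support of the iterate, so that $T_\mathbb{A}^r(x)$ and $\mathbb{A}^r x$ share the same zero pattern for nonnegative $x$. Should one wish to bypass Proposition \ref{pro22} and argue directly from the definition, this zero-pattern bridge---proved via the associative law for the general product together with the elementary fact that $y>0\iff y^{[1/(m-1)]}>0$ for nonnegative $y$---is the step that would demand the most care; everything else reduces to the two one-line estimates above.
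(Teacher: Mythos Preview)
Your argument is correct. The equivalence ``$\mathbb{B}$ is essentially positive $\Longleftrightarrow M(\mathbb{B})>0$'' is exactly the observation that bridges Proposition~\ref{pro22} and Proposition~\ref{pro23}, and your two one-line computations (testing on $e_j$ for the forward direction, bounding below by a single diagonal summand for the converse) are the right ones.

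As for comparison: the paper does not actually supply a proof of Proposition~\ref{pro23}. It is quoted verbatim from \cite{YHY13}, Remark~2.6, and used as a black box. The argument in that reference is precisely the one you give --- reduce to Shao's characterization (Proposition~\ref{pro22}) via the essential-positivity/majorization-matrix equivalence --- so your proposal matches both the spirit and the letter of the original source. Your closing remark about the zero-pattern bridge between $T_\mathbb{A}^r(x)$ and $\mathbb{A}^r x$ correctly identifies where the real work lives (namely in Proposition~\ref{pro22} itself), but since you are invoking Proposition~\ref{pro22} as given, nothing further is needed here.
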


In \cite{YHY13}, the authors introduce some theoretical concepts of digraphs and matrices.

Let $D=(V,A)$ denote  a digraph on $n$ vertices. Loops are
permitted, but no multiple arcs. A $u\rightarrow v$ walk in
$D$ is a sequence of vertices $u, u_1,\ldots, u_k=v$ and a
sequence of arcs $e_1=(u,u_1),e_2=(u_1,u_2), \ldots,
e_k=(u_{k-1},v)$, where the vertices and the arcs are not
necessarily distinct. We use the notation
$u\rightarrow u_1\rightarrow u_2\rightarrow \cdots \rightarrow u_{k-1}\rightarrow v$
to refer to this  $u\rightarrow v$ walk.
A closed walk is a $u\rightarrow v$ walk where $u=v$. A path is a walk with distinct vertices. A
 cycle is a closed $u\rightarrow v$ walk with distinct
vertices except for $u=v$. The length of a walk $W$ is the
number of arcs in $W$, denoted by $l(W)$. A $k$-cycle is a cycle of length $k$, denoted by $C_k$.

\begin{definition}\label{defn24}{\rm (\cite{YHY13} Definition 2.9)}
Let $D=(V,A)$ denote  a digraph on $n$ vertices.
A digraph $D^{\prime}=(V, A^{\prime})$ is called the reversed digraph of $D$
where $(j,i)\in A^{\prime}$ if and only if $(i,j)\in A$ for any $i,j\in V$, denoted by $\overleftarrow{D}$.
\end{definition}

Let $M=(m_{ij})$ be a square nonnegative matrix of order $n$.
The associated digraph $D(M)=(V, A)$ of $M$ (possibly with loops) is defined to be the digraph with vertex set
$V=\{1,2,\ldots,n\}$ and arc set $A=\{(i,j)|m_{ij}\neq 0\}$.
The associated reversed digraph $\overleftarrow{D(M)}=(V, A^{\prime})$ of $M$ (possibly with loops) is defined to be the digraph with vertex set
$V=\{1,2,\ldots,n\}$ and arc set $A=\{(j,i)|m_{ij}\neq 0\}$.
Clearly, the associated reversed digraph of $M$ is the reversed digraph of the associated digraph of $M$.

Let $N^+_D(i)=\{j\in V(D)|(i, j)\in E(D)\}$ denote the  out-neighbors of $i$ and $d^+_i=|N^+_D(i)|$ denote the out-degree of the vertex $i$ in $D$.
The following Proposition is the graph theoretical version of Proposition 2.7 in \cite{YHY13}.

\begin{proposition}\label{pro25}{\rm (\cite{YHY13} Proposition 2.7)}
Let $\mathbb{A}$ be a nonnegative primitive tensor with order $m$ and dimension $n$, $M(\mathbb{A})$ the majorization matrix of $\mathbb{A}$.
Then in the digraph $\overleftarrow{D(M(\mathbb{A}))}=(V, A^{\prime})$, we have:

{\rm (i) } For each $j\in V$, the out-degree of the vertex $j$, $d^+_j\geq 1$ and $N^+_D(j)\ne\{j\}$.

{\rm (ii) } There exists at least a vertex $j\in V$,   the out-degree of the vertex $j$, $d^+_j\geq 2$.
\end{proposition}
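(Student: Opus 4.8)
The plan is to read off the out-neighbourhoods of $\overleftarrow{D(M(\mathbb{A}))}$ directly from the action of the map $T_\mathbb{A}$ on the standard basis vectors, and then to exploit primitivity through Definition \ref{defn21}. First I would record the two elementary computations on which everything rests. For the standard basis vector $e_j$ we have $(\mathbb{A}e_j)_i=\sum_{i_2,\ldots,i_m}a_{ii_2\cdots i_m}(e_j)_{i_2}\cdots(e_j)_{i_m}=a_{ij\cdots j}=(M(\mathbb{A}))_{ij}$, so $\mathbb{A}e_j$ is exactly the $j$-th column of $M(\mathbb{A})$. Since $\mathbb{A}$ is nonnegative, the entrywise root $x^{[1/(m-1)]}$ preserves supports on the nonnegative orthant, whence $\operatorname{supp}(T_\mathbb{A}(e_j))=\{i\mid a_{ij\cdots j}\neq0\}$. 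By the definition of $\overleftarrow{D(M(\mathbb{A}))}$ this support is precisely $N^+(j)$, the out-neighbourhood of $j$, so $d^+_j$ equals the number of nonzero entries in column $j$ of $M(\mathbb{A})$. This is the one identity that makes the rest routine.

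For part (i) I would argue by contradiction in each of the two clauses, using that $T_\mathbb{A}(0)=0$ and that $T_\mathbb{A}$ is positively homogeneous. If $d^+_j=0$, then $\mathbb{A}e_j=0$, so $T_\mathbb{A}(e_j)=0$ and therefore $T_\mathbb{A}^r(e_j)=0$ for every $r\ge1$; since $e_j$ is a nonzero nonnegative vector, this contradicts primitivity, which by Definition \ref{defn21} forces $T_\mathbb{A}^r(e_j)>0$ for some $r$. If instead $N^+(j)=\{j\}$, then column $j$ of $M(\mathbb{A})$ has its only nonzero entry in row $j$, so $\mathbb{A}e_j=a_{jj\cdots j}e_j$ with $a_{jj\cdots j}>0$, and hence $T_\mathbb{A}(e_j)=(a_{jj\cdots j})^{1/(m-1)}e_j$ is a positive multiple of $e_j$. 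Iterating, every $T_\mathbb{A}^r(e_j)$ is a positive multiple of $e_j$, which is never strictly positive, again contradicting primitivity.

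For part (ii) I would once more assume the contrary: suppose $d^+_j\le1$ for all $j$. Combined with part (i), this means every column of $M(\mathbb{A})$ has exactly one nonzero entry, so there is a map $\sigma\colon[n]\to[n]$ with $\mathbb{A}e_j=a_{\sigma(j)j\cdots j}\,e_{\sigma(j)}$ and $a_{\sigma(j)j\cdots j}>0$. Then $T_\mathbb{A}(e_j)$ is a positive multiple of $e_{\sigma(j)}$, and by induction $T_\mathbb{A}^r(e_j)$ is a positive multiple of $e_{\sigma^r(j)}$ for every $r$. Such a vector is supported on a single coordinate and is never strictly positive, contradicting primitivity. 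Hence some vertex has $d^+_j\ge2$.

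The only genuine content is the opening support-tracking identity $\operatorname{supp}(T_\mathbb{A}(e_j))=N^+(j)$; once it is in place, both parts reduce to the observation that $T_\mathbb{A}$ maps a coordinate ray into another coordinate ray whenever the relevant column of $M(\mathbb{A})$ is (sub)singleton, so that the orbit of $e_j$ can never become positive. I expect the only point requiring care to be purely bookkeeping: keeping straight the reversal in passing from $M(\mathbb{A})$ to $\overleftarrow{D(M(\mathbb{A}))}$, so that ``column $j$ of $M(\mathbb{A})$'' is correctly identified with the \emph{out}-neighbourhood of $j$ rather than its in-neighbourhood.
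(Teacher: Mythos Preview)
Your argument is correct. The key identity $\operatorname{supp}(T_\mathbb{A}(e_j))=N^+(j)$ in $\overleftarrow{D(M(\mathbb{A}))}$ is exactly right (an arc $(j,i)$ in the reversed digraph corresponds to $(M(\mathbb{A}))_{ij}\ne 0$, i.e.\ to the nonzero rows of column $j$), and the positive homogeneity of $T_\mathbb{A}$ justifies the iteration step. Both contradiction arguments go through for $n\ge 2$, which is the ambient assumption throughout.

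As for comparison: the present paper does not supply a proof of this proposition; it is quoted verbatim from \cite{YHY13} (Proposition~2.7 there) as background and is described as ``the graph theoretical version'' of a matrix statement in that reference. So there is no in-paper argument to compare against. Your proof via the orbit of $e_j$ under $T_\mathbb{A}$ is precisely in the spirit of the surrounding material (cf.\ the sets $T_k(\mathbb{A},j)$ introduced just before Theorem~\ref{thm33}, which track the same supports), and would be a natural way to reconstruct the original proof.
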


The authors \cite{YHY13} also presented the definition of $j$-primitive degree for a nonnegative tensor and obtained the following result.

\begin{definition}\label{defn26}{\rm (\cite{YHY13} Definition 2.13)}
 Let $\mathbb{A}$ be a nonnegative  tensor with order $m$ and dimension $n$. For a fixed integer $j\in[n]$, if there exists a positive integer $k$ such that
$$(M(\mathbb{A}^k))_{uj}>0, \hskip.2cm {\mbox for\,\, all } \, u \in[n],$$
then $\mathbb{A}$ is called $j$-primitive and the smallest such integer $k$ is called the $j$-primitive degree of $\mathbb{A}$, denoted by $\gamma_j(\mathbb{A})$. \end{definition}

\begin{proposition}\label{pro27} {\rm (\cite{YHY13} Proposition 2.14)}
Let $\mathbb{A}$ be a nonnegative primitive  tensor with order $m$ and dimension $n$. Then
$\gamma(\mathbb{A})=\max\limits_{1\le j\le n}\{\gamma_j(\mathbb{A})\}.$\end{proposition}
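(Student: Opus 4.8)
The plan is to reduce everything to a single monotonicity statement about the columns of the majorization matrices $M(\mathbb{A}^k)$. By Proposition \ref{pro23}, $\gamma(\mathbb{A})$ is the least $r$ with $M(\mathbb{A}^r)>0$, i.e. the least $r$ such that \emph{every} column of $M(\mathbb{A}^r)$ is positive; by Definition \ref{defn26}, $\gamma_j(\mathbb{A})$ is the least $k$ such that the $j$-th column of $M(\mathbb{A}^k)$ is positive. These $\gamma_j$ are all finite, since $M(\mathbb{A}^{\gamma(\mathbb{A})})>0$ already makes every column positive, so $\gamma_j(\mathbb{A})\le\gamma(\mathbb{A})$. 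The key claim I would establish is: \emph{if the $j$-th column of $M(\mathbb{A}^k)$ is positive, then so is the $j$-th column of $M(\mathbb{A}^{k+1})$}, and hence the $j$-th column of $M(\mathbb{A}^\ell)$ is positive for every $\ell\ge k$.

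To prove the claim I first record a column recursion. Using the associative law and $\mathbb{A}^{k+1}=\mathbb{A}\,\mathbb{A}^k$, I set $i=u$ and all the block indices $\alpha_1,\dots,\alpha_{m-1}$ equal to the constant multi-index $(j,\dots,j)$ in Shao's product formula with $\mathbb{B}=\mathbb{A}^k$. Since $(\mathbb{A}^k)_{i_t,j\cdots j}=(M(\mathbb{A}^k))_{i_t j}$, this yields
\[
(M(\mathbb{A}^{k+1}))_{uj}=\sum_{i_2,\ldots,i_m} a_{u i_2\cdots i_m}\,(M(\mathbb{A}^{k}))_{i_2 j}\cdots (M(\mathbb{A}^{k}))_{i_m j}.
\]
As all entries are nonnegative, $(M(\mathbb{A}^{k+1}))_{uj}>0$ holds precisely when there exist $i_2,\dots,i_m$ with $a_{u i_2\cdots i_m}>0$ and $(M(\mathbb{A}^k))_{i_t j}>0$ for every $t$.

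Next I would note that primitivity forces every first-index slice of $\mathbb{A}$ to be nonzero: if $a_{u i_2\cdots i_m}=0$ for all $i_2,\dots,i_m$, then the recursion (started from $(M(\mathbb{A}))_{uj}=a_{uj\cdots j}=0$) shows the $u$-th row of $M(\mathbb{A}^\ell)$ vanishes for every $\ell$, contradicting $M(\mathbb{A}^{\gamma(\mathbb{A})})>0$. Granting this, the claim is immediate: assume the $j$-th column of $M(\mathbb{A}^k)$ is positive and fix any $u$; choose $i_2,\dots,i_m$ with $a_{u i_2\cdots i_m}>0$, and since the whole $j$-th column of $M(\mathbb{A}^k)$ is positive we automatically get $(M(\mathbb{A}^k))_{i_t j}>0$ for each $t$, so $(M(\mathbb{A}^{k+1}))_{uj}>0$. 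As $u$ was arbitrary, the $j$-th column of $M(\mathbb{A}^{k+1})$ is positive, and induction finishes the monotonicity.

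Finally I assemble the two inequalities. Writing $g=\max_{1\le j\le n}\gamma_j(\mathbb{A})$: on one hand $\gamma_j(\mathbb{A})\le\gamma(\mathbb{A})$ for all $j$ as shown above, so $g\le\gamma(\mathbb{A})$. On the other hand, for each $j$ the $j$-th column of $M(\mathbb{A}^{\gamma_j(\mathbb{A})})$ is positive, so by monotonicity (since $g\ge\gamma_j(\mathbb{A})$) the $j$-th column of $M(\mathbb{A}^g)$ is positive; as this holds for every $j$, $M(\mathbb{A}^g)>0$ and hence $\gamma(\mathbb{A})\le g$. Therefore $\gamma(\mathbb{A})=g$. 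The one genuinely tensor-specific point, and the place I would be most careful, is the column recursion together with the observation that the coupled condition ``$(M(\mathbb{A}^k))_{i_t j}>0$ for the same $i_2,\dots,i_m$ that makes $a_{u i_2\cdots i_m}>0$'' becomes vacuous exactly when the entire column is already positive; this is what makes the conjunction-over-slices behaviour of the tensor product collapse to the simple monotonicity needed here.
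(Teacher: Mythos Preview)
Your proof is correct. Note that the paper does not itself prove Proposition~\ref{pro27}; it is quoted from \cite{YHY13}. However, your argument lines up precisely with the machinery the present paper develops: your column recursion is exactly equation~(\ref{eq31}), and your monotonicity claim (``once the $j$-th column of $M(\mathbb{A}^k)$ is positive it stays positive'') is exactly Lemma~\ref{lem31}(ii) in the language of the sets $S_k(\mathbb{A},j)$.

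The only minor methodological difference is in how you establish the monotonicity step. You first prove an auxiliary fact (every first-index slice of $\mathbb{A}$ is nonzero, using primitivity) and then pick an arbitrary positive entry $a_{u i_2\cdots i_m}>0$. The paper's proof of Lemma~\ref{lem31}(ii) avoids this auxiliary claim: when $k>1$ it observes that $S_{k-1}(\mathbb{A},j)\subseteq [n]=S_k(\mathbb{A},j)$, so the very indices $j_2,\dots,j_m$ that put $u$ into $S_k$ also put $u$ into $S_{k+1}$; the case $k=1$ is handled by the diagonal choice $j_2=\cdots=j_m=j$. Both routes are short and valid; yours has the pleasant side effect of isolating a structural fact about primitive tensors, while the paper's is self-contained within the recursion.
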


\section{Necessary and sufficient conditions for  nonnegative  tensors to be primitive}

\hskip.6cm In this section, we will present necessary and sufficient conditions for a nonnegative  tensor to be a primitive one.
Let $\mathbb{A}$ be a nonnegative  tensor with order $m$ and dimension $n$. For positive integers $k$ and $j\in[n]$, we put

$$S_k(\mathbb{A}, j)=\{u\in[n]\hskip.08cm |\hskip.08cm (M(\mathbb{A}^k))_{uj}>0 \}, k=1, 2, \ldots.$$

\noindent Then $ S_1(\mathbb{A}, j)=\{u\in[n]\hskip.08cm|\hskip.08cm (M(\mathbb{A}))_{uj}>0 \}=N_{\overleftarrow{D(M(\mathbb{A}))}}^+(j)$.

 By the definitions of the  general tensor product and the majorization matrix of a tensor, we can show that
 \begin{equation}\label{eq31}(M(\mathbb{A}^{k+1}))_{uj}=\sum\limits_{j_2,\ldots, j_m=1}^{n}a_{uj_2\cdots j_m}(M(\mathbb{A}^k))_{j_2j}\cdots(M(\mathbb{A}^k))_{j_mj},\end{equation}

\noindent it follows that $u\in S_{k+1}(\mathbb{A}, j)$ if and only if there exist indices $j_2, \ldots, j_m\in S_k(\mathbb{A}, j)$  and $a_{uj_2\cdots j_m}>0$.
Thus
 \begin{equation}\label{eq32}S_{k+1}(\mathbb{A}, j)=\{u\in[n]\hskip.08cm|\hskip.08cm\mbox {there exist }  j_2, \ldots, j_m\in S_k(\mathbb{A}, j) \mbox{ and } a_{uj_2\cdots j_m}>0\}.\end{equation}

By the definition of  $S_k(\mathbb{A}, j)$, we have the following  result.
\begin{lemma}\label{lem31} Let $\mathbb{A}$ be a nonnegative  tensor with order $m$ and dimension $n$.

{\rm (i) } Let $k, l, i, j$ be positive integers such that $1\le i, j\le n$. Suppose that $S_k(\mathbb{A}, i)=S_l(\mathbb{A}, j)$, then
$S_{k+r}(\mathbb{A}, i)=S_{l+r}(\mathbb{A}, j)$ holds for every positive integer $r$.

{\rm (ii) } For any $j\in[n]$, let $k$ be the least positive integer such that $S_k(\mathbb{A}, j)=[n]$.
Then for any integer $l\geq k$,  $S_l(\mathbb{A}, j)=[n]$.
\end{lemma}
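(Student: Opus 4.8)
The plan is to isolate the mechanism common to both parts: the recursion \eqref{eq32} expresses $S_{k+1}(\mathbb{A},j)$ as a function of the single set $S_k(\mathbb{A},j)$, with no further dependence on $k$ or on $j$. Concretely, I would introduce, for each subset $T\subseteq[n]$, the set
$$\Phi(T)=\{u\in[n]\mid \text{there exist } j_2,\ldots,j_m\in T \text{ with } a_{uj_2\cdots j_m}>0\},$$
so that \eqref{eq32} reads $S_{k+1}(\mathbb{A},j)=\Phi\big(S_k(\mathbb{A},j)\big)$ for all $j\in[n]$ and all $k\ge 1$, and I would record the obvious monotonicity: $T\subseteq T'$ implies $\Phi(T)\subseteq\Phi(T')$.

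For part (i), I would argue by induction on $r$. Iterating the identity above gives $S_{k+r}(\mathbb{A},i)=\Phi^{r}\big(S_k(\mathbb{A},i)\big)$ and $S_{l+r}(\mathbb{A},j)=\Phi^{r}\big(S_l(\mathbb{A},j)\big)$; since $S_k(\mathbb{A},i)=S_l(\mathbb{A},j)$ by hypothesis, applying the same map $\Phi^{r}$ to both sides yields $S_{k+r}(\mathbb{A},i)=S_{l+r}(\mathbb{A},j)$, as desired.

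For part (ii), the first step is to verify that, under the standing hypothesis that $S_k(\mathbb{A},j)=[n]$ for some $k$, the full index set $[n]$ is a fixed point of $\Phi$. When $k\ge 2$, monotonicity together with $S_{k-1}(\mathbb{A},j)\subseteq[n]$ gives $[n]=S_k(\mathbb{A},j)=\Phi\big(S_{k-1}(\mathbb{A},j)\big)\subseteq\Phi([n])\subseteq[n]$, hence $\Phi([n])=[n]$; when $k=1$, the equality $S_1(\mathbb{A},j)=[n]$ unravels, via $S_1(\mathbb{A},j)=\{u\mid (M(\mathbb{A}))_{uj}>0\}$, to $a_{uj\cdots j}>0$ for every $u$, so taking $j_2=\cdots=j_m=j$ again shows $\Phi([n])=[n]$. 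In either case $S_{k+1}(\mathbb{A},j)=\Phi\big(S_k(\mathbb{A},j)\big)=\Phi([n])=[n]$, and one finishes either by a direct induction on $l$ or by invoking part (i) with $i=j$ (using $S_k(\mathbb{A},j)=S_{k+1}(\mathbb{A},j)$), concluding $S_l(\mathbb{A},j)=[n]$ for every $l\ge k$.

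I do not anticipate a genuine obstacle here: the content is just the ``the future depends only on the present'' principle for the set-valued dynamics $T\mapsto\Phi(T)$, together with the fact that $[n]$, once reached, is a fixed point. The one place that needs a little care is the base case $k=1$ in part (ii), where the recursion \eqref{eq32} is not yet in force and one must instead read off the required nonvanishing directly from the definition of $S_1(\mathbb{A},j)$ in terms of $M(\mathbb{A})$.
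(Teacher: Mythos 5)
Your proposal is correct and follows essentially the same route as the paper: part (i) is the same induction using the fact that \eqref{eq32} determines $S_{k+1}(\mathbb{A},j)$ from $S_k(\mathbb{A},j)$ alone, and part (ii) splits into the same two cases, with your monotonicity of $\Phi$ being exactly the containment argument the paper runs for $k>1$ and the choice $j_2=\cdots=j_m=j$ handling $k=1$. The map $\Phi$ is a clean repackaging but not a different proof.
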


\begin{proof} (i) Since $S_k(\mathbb{A}, i)=S_l(\mathbb{A}, j)$, by (\ref{eq32}), we have

\hskip0.70cm$S_{k+1}(\mathbb{A}, i)=\{u\in[n] \hskip.08cm|\hskip.08cm \mbox {there exist }  j_2, \ldots, j_m\in S_k(\mathbb{A}, i) \mbox{ and } a_{uj_2\cdots j_m}>0\}$

\hskip2.6cm$=\{u\in[n]\hskip.08cm |\hskip.08cm \mbox {there exist }  j_2, \ldots, j_m\in S_l(\mathbb{A}, j) \mbox{ and } a_{uj_2\cdots j_m}>0\}$

\hskip2.6cm$=S_{l+1}(\mathbb{A}, j).$

\noindent Therefore (i) follows by induction on $r$.

(ii) Let $k$ be the least positive integer such that $S_k(\mathbb{A}, j)=[n]$. We complete the proof by the following two cases.

\noindent {\bf Case 1: }  $k>1$.

Then $S_{k-1}(\mathbb{A}, j)\subset[n]$, by (\ref{eq32}), we have

$[n]=S_k(\mathbb{A}, j)$

\hskip0.6cm$=\{u\in[n]\hskip.08cm | \hskip.08cm \mbox {there exist }  j_2, \ldots, j_m\in S_{k-1}(\mathbb{A}, j)\subset [n]
\mbox{ and } a_{uj_2\cdots j_m}>0\}$

\hskip0.6cm$\subseteq\{u\in[n] \hskip.08cm|\hskip.08cm \mbox {there exist }  j_2, \ldots, j_m\in S_{k}(\mathbb{A}, j)= [n]
\mbox{ and } a_{uj_2\cdots j_m}>0\}$

\hskip0.6cm $=S_{k+1}(\mathbb{A}, j)\subseteq [n]$.

\noindent Thus $S_{k+1}(\mathbb{A}, j)=[n], $  and we can show $S_{k+r}(\mathbb{A}, j)=[n]$ for any nonnegative integer $r$ by induction on $r$.




  \noindent {\bf Case 2: }  $k=1$.

 Then $S_1(\mathbb{A}, j)=[n]$, i.e., $(M(\mathbb{A}))_{uj}>0$ for all $u\in[n]$.
By taking $j_2=j_3=\cdots=j_m=j$, it is easy to show that $S_2(\mathbb{A}, j)=[n]$.
Therefore $S_{l}(\mathbb{A}, j)=[n]$ for any positive integer $l$ by induction on $l$.
\end{proof}

\begin{remark}\label{rem32}
Let $\mathbb{A}$ be a nonnegative  tensor with order $m$ and dimension $n$. Then for any $j\in[n]$, $\gamma_j(\mathbb{A})$ is the least positive integer $k$ satisfying $S_k(\mathbb{A}, j)=[n]$ by Definition \ref{defn26}.
\end{remark}

Denote $e_j$  the vector (of dimension $n$) whose $j$th component is $1$ and others are $0$ for any $j=1, \ldots, n$.
Let $x_j^{(0)}=e_j$ and for any nonnegative integer $k$, we define
 $$x^{(k+1)}_j=\mathbb{A}x^{(k)}_j.$$

 By the definition of the general tensor product by Shao, we know $x_j^{(0)}, x_j^{(1)}, \ldots $ are  vectors (of dimension $n$). Let
 $$T_{k}(\mathbb{A}, j)=\{u\in[n]\hskip.08cm |\hskip.08cm \mbox{the $u$th component of } x^{(k)}_j \mbox{ is larger than 0}\}.$$

By the definition of $x_j^{(1)}$,  for $u\in[n]$, we have
$$\left(x_j^{(1)}\right)_u=a_{uj\cdots j}=(M(\mathbb{A}))_{uj},$$
it follows that $S_1(\mathbb{A}, j)=T_1(\mathbb{A}, j)$ by the definitions of $S_1(\mathbb{A}, j)$ and $T_1(\mathbb{A}, j)$. Since
$$\left(x^{(k+1)}_j\right)_u=\sum_{i_2, \ldots, i_m=1}^na_{ui_2\cdots i_m}\left(x^{(k)}_j\right)_{i_2}\cdots\left(x^{(k)}_j\right)_{i_m}, $$
it follows that $\left(x^{(k+1)}_j\right)_u>0$ if there exist  some indices $i_2, \ldots, i_m\in T_k(\mathbb{A}, j)$ and  $a_{ui_2\cdots i_m}>0$, thus $S_{k+1}(\mathbb{A}, j)=T_{k+1}(\mathbb{A}, j)$. Now by induction on $k$ and  the definitions of $S_k(\mathbb{A}, j)$ and $T_k(\mathbb{A}, j)$, we see that $S_k(\mathbb{A}, j)=T_k(\mathbb{A}, j)$ for all $k$ and $j\in[n]$. Therefore, we have

 \begin{theorem}\label{thm33}
 Let $\mathbb{A}$ be a nonnegative tensor with order $m$ and dimension $n$. For a fixed integer $j\in[n]$,  $\mathbb{A}$ is  $j$-primitive if and only if
 there exists some positive  integer $k$ such that $x^{(k)}_j>0$  and the smallest such integer $k$ is  the $j$-primitive degree $\gamma_j(\mathbb{A})$.\end{theorem}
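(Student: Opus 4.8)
The strategy is to transfer the set-theoretic description of $\gamma_j(\mathbb{A})$ recorded in Remark \ref{rem32} to the vector iteration $x_j^{(0)}, x_j^{(1)}, \ldots$, using the identity $S_k(\mathbb{A}, j) = T_k(\mathbb{A}, j)$ established just above the statement. First I would note that, since $\mathbb{A}$ is nonnegative and $x_j^{(0)} = e_j$ is a nonnegative vector, a trivial induction gives $x_j^{(k)} \ge 0$ for every $k$; consequently $x_j^{(k)} > 0$ holds precisely when the support $T_k(\mathbb{A}, j)$ of $x_j^{(k)}$ is all of $[n]$.

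Next I would invoke the equality $S_k(\mathbb{A}, j) = T_k(\mathbb{A}, j)$, valid for every $k\ge 1$ and every $j\in[n]$, which rewrites ``$x_j^{(k)} > 0$'' as ``$S_k(\mathbb{A}, j) = [n]$''. By Definition \ref{defn26}, $\mathbb{A}$ being $j$-primitive means exactly that some such $k$ exists, and by Remark \ref{rem32} the least such $k$ equals $\gamma_j(\mathbb{A})$. Combining these two observations yields both the stated equivalence and the identification of the smallest $k$ with $\gamma_j(\mathbb{A})$.

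The only step that requires a genuine (but short) argument is the ``only if'' half of the inductive proof of $S_{k+1}(\mathbb{A}, j) = T_{k+1}(\mathbb{A}, j)$: starting from
$$\left(x^{(k+1)}_j\right)_u=\sum_{i_2, \ldots, i_m=1}^n a_{ui_2\cdots i_m}\left(x^{(k)}_j\right)_{i_2}\cdots\left(x^{(k)}_j\right)_{i_m},$$
all summands are nonnegative, so $(x_j^{(k+1)})_u>0$ forces at least one summand to be positive, i.e. there exist indices $i_2,\ldots,i_m$ with $a_{ui_2\cdots i_m}>0$ and each $i_\ell$ in the support $T_k(\mathbb{A}, j)$ of $x_j^{(k)}$; comparing this with (\ref{eq32}) and applying the inductive hypothesis $T_k(\mathbb{A}, j) = S_k(\mathbb{A}, j)$ closes the induction, the base case $k=1$ being the already noted identity $(x_j^{(1)})_u = a_{uj\cdots j} = (M(\mathbb{A}))_{uj}$. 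I do not expect any real obstacle here: the theorem is essentially a repackaging of the preceding computation of $S_k(\mathbb{A},j)$ into the language of the iterated vectors $x_j^{(k)}$, and the nonnegativity of $\mathbb{A}$ is what makes the positivity of a sum equivalent to the positivity of one of its terms.
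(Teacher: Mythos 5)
Your proposal is correct and follows essentially the same route as the paper: the paper proves the identity $S_k(\mathbb{A},j)=T_k(\mathbb{A},j)$ by the same induction in the paragraph immediately preceding the theorem and then reads off the statement from Definition \ref{defn26} and Remark \ref{rem32}. If anything, you are slightly more careful than the paper, which only asserts the ``if'' direction of the inductive step ($(x_j^{(k+1)})_u>0$ \emph{if} suitable indices exist), whereas your nonnegativity argument supplies the needed converse explicitly.
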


\begin{theorem}\label{thm34}
Let $\mathbb{A}$ be a nonnegative  tensor with order $m$ and dimension $n$. Then $\mathbb{A}$ is primitive if and only if
 there exists some positive  integer $k$ such that $x^{(k)}_j>0$ for all $j\in[n]$.
 Furthermore,  the smallest such integer $k$ is  the primitive degree $\gamma(\mathbb{A})$ and $\gamma(\mathbb{A})\le (n-1)^2+1$.\end{theorem}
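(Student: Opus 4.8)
The plan is to reduce the statement to Proposition \ref{pro23} by exploiting the identity $S_k(\mathbb{A},j)=T_k(\mathbb{A},j)$ that was established just before Theorem \ref{thm33}. First I would record the elementary observation that, for a fixed $j\in[n]$, the vector $x^{(k)}_j$ is strictly positive if and only if $T_k(\mathbb{A},j)=[n]$, hence by the above identity if and only if $S_k(\mathbb{A},j)=[n]$, i.e. $(M(\mathbb{A}^k))_{uj}>0$ for every $u\in[n]$. Therefore the condition ``$x^{(k)}_j>0$ for all $j\in[n]$'' is equivalent to ``$(M(\mathbb{A}^k))_{uj}>0$ for all $u,j\in[n]$'', that is, to $M(\mathbb{A}^k)>0$.

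Granting this, the main equivalence is immediate: by Proposition \ref{pro23}, $\mathbb{A}$ is primitive if and only if there is a positive integer $k$ with $M(\mathbb{A}^k)>0$, and by the previous paragraph this is exactly the assertion that $x^{(k)}_j>0$ for all $j\in[n]$ for some $k$. For the ``furthermore'' part, I would note that the two conditions ``$M(\mathbb{A}^k)>0$'' and ``$x^{(k)}_j>0$ for all $j\in[n]$'' hold for precisely the same set of positive integers $k$; hence their least elements coincide, and by Proposition \ref{pro23} that least element is $\gamma(\mathbb{A})$. (Alternatively one can route through the $j$-primitive degrees: by Theorem \ref{thm33} the least $k$ with $x^{(k)}_j>0$ is $\gamma_j(\mathbb{A})$, and using Lemma \ref{lem31}(ii) to see that $x^{(k)}_j>0$ persists for all larger $k$, the least $k$ valid for every $j$ simultaneously is $\max_{1\le j\le n}\gamma_j(\mathbb{A})$, which equals $\gamma(\mathbb{A})$ by Proposition \ref{pro27}.) Finally, the bound $\gamma(\mathbb{A})\le (n-1)^2+1$ for primitive $\mathbb{A}$ is exactly Theorem \ref{thm11}.

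The argument is essentially bookkeeping linking the iterates $x^{(k)}_j$ of the map to the matrices $M(\mathbb{A}^k)$, so there is no serious obstacle; the only point needing a little care is the matching of the minimal exponents. Here it is cleanest to observe that the vector condition and the condition $M(\mathbb{A}^k)>0$ are literally equivalent for each fixed $k$, so no monotonicity argument is required; if one instead prefers the $j$-primitive-degree route, then Lemma \ref{lem31}(ii) is precisely the ingredient guaranteeing that a common exponent exists and equals $\max_j\gamma_j(\mathbb{A})$.
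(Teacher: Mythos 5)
Your proposal is correct and follows essentially the same route as the paper: both reduce the statement to Proposition \ref{pro23} via the identity $S_k(\mathbb{A},j)=T_k(\mathbb{A},j)$ established before Theorem \ref{thm33}, and both quote Theorem \ref{thm11} for the bound. If anything, your observation that ``$M(\mathbb{A}^k)>0$'' and ``$x^{(k)}_j>0$ for all $j$'' are equivalent for each \emph{fixed} $k$ handles the matching of the minimal exponents a bit more cleanly than the paper, which routes through the $j$-primitive degrees and Proposition \ref{pro27} (your parenthetical alternative is exactly the paper's argument).
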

\begin{proof} By Proposition \ref{pro23}, Definition \ref{defn26} and Theorem \ref{thm33}, we know that

\hskip.3cm $\mathbb{A}$ is primitive

\noindent $\Longleftrightarrow$ there exists some positive $k$ such that $M(\mathbb{A}^k)>0$

\noindent $\Longleftrightarrow$ for all $j\in [n]$, there exists some positive $k$ such that $(M(\mathbb{A}^k))_{uj}>0$ for all $u\in [n]$

\noindent $\Longleftrightarrow$ for all $j\in [n]$, $\mathbb{A}$ is $j$-primitive

\noindent $\Longleftrightarrow$ for all $j\in [n]$, there exists some positive  integer $k$ such that $x^{(k)}_j>0$.

Thus by Theorem \ref{thm11}, Proposition \ref{pro27}, Theorem \ref{thm33} and the above arguments,
we  know that the smallest such integer $k$ is  the primitive degree $\gamma(\mathbb{A})$ and $\gamma(\mathbb{A})\le (n-1)^2+1$.
\end{proof}

\section{Proof of the main result}
\hskip.6cm By the relation between matrices and digraphs, we know that

$(A^k)_{uj}>0 \Longleftrightarrow \mbox{ there exists a walk of length $k$ from $u$ to $j$ in the digraph } D(A)$

\hskip2cm $\Longleftrightarrow \mbox{ there exists a walk of length $k$ from $j$ to $u$ in the digraph } \overleftarrow{D(A)}.$

\noindent Then we obtain the following proposition.
\begin{proposition}\label{pro41}
Let $A$ be a nonnegative matrix of order $n$,  $j\in [n], k$ be positive integers,
$S_k(A,j)=\{u\in [n]  \hskip0.08cm|\hskip0.08cm (A^k)_{uj}>0\}$.
Then $$S_k(A,j)=\{u\in [n] \hskip0.08cm|\hskip0.08cm \mbox{there exists a walk of length $k$ from $j$ to $u$ in the digraph } \overleftarrow{D(A)}\}.$$
\end{proposition}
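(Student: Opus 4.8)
The plan is to prove the identity by the standard correspondence between entries of matrix powers and walks in the associated digraph, and then to transport everything to the reversed digraph $\overleftarrow{D(A)}$ by means of Definition \ref{defn24}.

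First I would expand the $(u,j)$-entry of $A^k$ as
$$(A^k)_{uj}=\sum_{i_1,\ldots,i_{k-1}=1}^{n}a_{ui_1}a_{i_1i_2}\cdots a_{i_{k-1}j},$$
with the convention $i_0=u$ and $i_k=j$; this follows by a routine induction on $k$ from the definition of matrix multiplication. Since $A$ is nonnegative, every summand is nonnegative, so $(A^k)_{uj}>0$ if and only if at least one summand is positive, i.e.\ if and only if there exist indices $u=i_0,i_1,\ldots,i_{k-1},i_k=j$ with $a_{i_{t-1}i_t}>0$ for every $t\in[k]$. By the definition of the associated digraph $D(A)$, the condition $a_{i_{t-1}i_t}>0$ says precisely that $(i_{t-1},i_t)$ is an arc of $D(A)$; hence $(A^k)_{uj}>0$ if and only if $u=i_0\rightarrow i_1\rightarrow\cdots\rightarrow i_k=j$ is a walk of length $k$ from $u$ to $j$ in $D(A)$. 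This reproves the two equivalences displayed just before the proposition.

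Next I would pass to the reversed digraph. By Definition \ref{defn24}, $(p,q)$ is an arc of $D(A)$ if and only if $(q,p)$ is an arc of $\overleftarrow{D(A)}$. Consequently the walk $u=i_0\rightarrow i_1\rightarrow\cdots\rightarrow i_k=j$ in $D(A)$ exists if and only if the reversed sequence $j=i_k\rightarrow i_{k-1}\rightarrow\cdots\rightarrow i_0=u$ is a walk in $\overleftarrow{D(A)}$, and this reversed walk has the same length $k$ and runs from $j$ to $u$. Combining the two equivalences, $u\in S_k(A,j)$ if and only if there is a walk of length $k$ from $j$ to $u$ in $\overleftarrow{D(A)}$, which is exactly the asserted description of $S_k(A,j)$.

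There is no genuine obstacle here; the only point that deserves a line of care is making the arc-reversal operation on walks explicit, namely that it is an involutive bijection between the $u\rightarrow j$ walks of length $k$ in $D(A)$ and the $j\rightarrow u$ walks of length $k$ in $\overleftarrow{D(A)}$. If one prefers to avoid the closed-form expansion of $(A^k)_{uj}$, it can be replaced by an induction on $k$ expressing $S_{k+1}(A,j)$ in terms of $S_k(A,j)$, entirely analogous to the recursion (\ref{eq32}) already established for tensors in Section 3.
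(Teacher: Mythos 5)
Your proof is correct and takes essentially the same route as the paper, which simply records the standard equivalences $(A^k)_{uj}>0 \Longleftrightarrow$ there is a $u\rightarrow j$ walk of length $k$ in $D(A)$ $\Longleftrightarrow$ there is a $j\rightarrow u$ walk of length $k$ in $\overleftarrow{D(A)}$ as ``well known'' and deduces the proposition immediately. Your write-up merely makes the walk-counting expansion of $(A^k)_{uj}$ and the arc-reversal bijection explicit, which is fine.
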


\begin{lemma}\label{lem42}
Let $\mathbb{A}$ be a nonnegative   tensor with order $m$ and dimension $n$ such that
  $a_{ii_2\cdots i_m}=0$ if $i_2\cdots i_m\not=i_2\cdots i_2$  for any $i\in [n]$.
  Then for any positive integers $j\in [n]$ and $k$,
  \begin{equation}\label{eq41}
  S_k(\mathbb{A}, j)=S_k(M(\mathbb{A}),j).
  \end{equation}
\end{lemma}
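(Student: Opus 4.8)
The plan is to prove the identity $S_k(\mathbb{A}, j)=S_k(M(\mathbb{A}),j)$ by induction on $k$, exploiting the hypothesis that $\mathbb{A}$ has a very restricted zero pattern: the only entries $a_{ii_2\cdots i_m}$ that may be nonzero are those of the form $a_{ii_2i_2\cdots i_2}=(M(\mathbb{A}))_{ii_2}$. In other words, $\mathbb{A}$ is essentially determined by its majorization matrix $M(\mathbb{A})=:M$.

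First I would dispose of the base case $k=1$. By the definition of the majorization matrix, $(M(\mathbb{A}))_{uj}=a_{uj\cdots j}$, so $S_1(\mathbb{A},j)=\{u : (M(\mathbb{A}))_{uj}>0\}$, which is precisely $S_1(M,j)=\{u:(M)_{uj}>0\}$ when $M$ is viewed as an ordinary matrix (using Proposition \ref{pro41} applied to the matrix $M$, or just the definition). So the base case is immediate and carries no content beyond unwinding notation.

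For the inductive step, assume $S_k(\mathbb{A}, j)=S_k(M,j)$. Using the recursion (\ref{eq32}), a vertex $u$ lies in $S_{k+1}(\mathbb{A},j)$ iff there exist $j_2,\ldots,j_m\in S_k(\mathbb{A},j)$ with $a_{uj_2\cdots j_m}>0$. By the hypothesis on $\mathbb{A}$, $a_{uj_2\cdots j_m}>0$ forces $j_2=j_3=\cdots=j_m=:t$ and $a_{utt\cdots t}=(M)_{ut}>0$. Hence $u\in S_{k+1}(\mathbb{A},j)$ iff there exists $t\in S_k(\mathbb{A},j)$ with $(M)_{ut}>0$. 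Conversely, if such a $t\in S_k(\mathbb{A},j)$ exists, then choosing $j_2=\cdots=j_m=t$ gives $a_{uj_2\cdots j_m}=(M)_{ut}>0$, so $u\in S_{k+1}(\mathbb{A},j)$. By the induction hypothesis $S_k(\mathbb{A},j)=S_k(M,j)$, so this says: $u\in S_{k+1}(\mathbb{A},j)$ iff there exists $t\in S_k(M,j)$ with $(M)_{ut}>0$, i.e., iff $u\in S_{k+1}(M,j)$ by the matrix-version recursion $(M^{k+1})_{uj}=\sum_t (M)_{ut}(M^k)_{tj}$ together with the equivalence $t\in S_k(M,j)\iff (M^k)_{tj}>0$ (which is just Proposition \ref{pro41}, or the standard matrix identity relating walks and powers). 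This closes the induction.

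There is essentially no obstacle here; the only point requiring a little care is to handle both inclusions cleanly, since one direction uses the degenerate choice $j_2=\cdots=j_m=t$ to produce a nonzero tensor entry from a nonzero matrix entry, while the other uses the zero-pattern hypothesis to force any nonzero tensor entry $a_{uj_2\cdots j_m}$ to have all its last $m-1$ indices equal. I would state both directions explicitly to make the equality $S_{k+1}(\mathbb{A},j)=S_{k+1}(M,j)$ transparent, and then conclude (\ref{eq41}) for all $k$ by induction.
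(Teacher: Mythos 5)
Your proposal is correct and follows essentially the same route as the paper: induction on $k$, with the zero-pattern hypothesis forcing any nonzero entry $a_{uj_2\cdots j_m}$ to have $j_2=\cdots=j_m$, thereby reducing the recursion (\ref{eq32}) to the matrix recursion for $M(\mathbb{A})$. The only cosmetic difference is that the paper phrases the final identification $S_{k+1}(M(\mathbb{A}),j)$ via walks in $\overleftarrow{D(M(\mathbb{A}))}$ (Proposition \ref{pro41}) while you use the matrix-power recursion directly; these are equivalent.
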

\begin{proof}
We prove 
$S_k(\mathbb{A}, j)=S_k(M(\mathbb{A}),j)$
by induction on $k$.  Clearly, $k=1$ is obvious by the fact
$ S_1(\mathbb{A}, j)=\{u\in[n]\hskip0.08cm|\hskip0.08cm (M(\mathbb{A}))_{uj}>0 \}=N_{\overleftarrow{D(M(\mathbb{A}))}}^+(j)=S_1(M(\mathbb{A}),j).$

Assume that (\ref{eq41}) holds for $k=l\geq 1$. Then by (\ref{eq31}), (\ref{eq32}) and Proposition \ref{pro41}, we have

 $S_{l+1}(\mathbb{A}, j)$$=\{u\in[n]\hskip0.08cm|\hskip0.08cm (M(\mathbb{A}^{l+1}))_{uj}>0 \}$

\noindent $=\{u\in[n]\hskip0.08cm|\hskip0.08cm\mbox {there exist }  j_2, \ldots, j_m\in S_l(\mathbb{A}, j) \mbox{ and } a_{uj_2\cdots j_m}>0\}$

\noindent $=\{u\in[n]\hskip0.08cm|\hskip0.08cm\mbox {there exist }  v\in S_l(\mathbb{A}, j) \mbox{ and } a_{uv\cdots v}>0\}$

\noindent $=\{u\in[n]\hskip0.08cm|\hskip0.08cm\mbox {there exist }  v\in S_l(M(\mathbb{A}), j) \mbox{ and } (M(\mathbb{A}))_{uv}>0\}$

\noindent $=\{u\in [n]\hskip0.08cm |\hskip0.08cm \mbox{there exists a walk of length $l$ from $j$ to $v$  and arc $(v,u)$ in  } \overleftarrow{D(M(\mathbb{A}))}\}.$

\noindent $=\{u\in [n] \hskip0.08cm|\hskip0.08cm \mbox{there exists a walk of length $l+1$ from $j$ to $u$  in  } \overleftarrow{D(M(\mathbb{A}))}\}.$

\noindent $=S_{l+1}(M(\mathbb{A}),j)$.

Thus, for any positive integers $j\in [n]$ and $k$, $S_k(\mathbb{A}, j)=S_k(M(\mathbb{A}),j)$ holds.
\end{proof}

\begin{lemma}\label{lem43}{\rm (\cite{YHY13} Corollary 3.4)}
Let $\mathbb{A}$ be a nonnegative primitive  tensor with order $m$ and dimension $n$ such that
  $a_{ii_2\cdots i_m}=0$ if $i_2\cdots i_m\not=i_2\cdots i_2$  for any $i\in [n]$.
  If $M(\mathbb{A})$ is primitive, then
   $\gamma(\mathbb{A})=\gamma(M(\mathbb{A})).$
\end{lemma}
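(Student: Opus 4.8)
The plan is to reduce the statement entirely to Lemma \ref{lem42} together with the descriptions of the primitive degrees in terms of the sets $S_k(\cdot,j)$. First I would recall, from Proposition \ref{pro27} and Remark \ref{rem32}, that since $\mathbb{A}$ is primitive it is $j$-primitive for every $j\in[n]$ (this is the chain of equivalences already used in the proof of Theorem \ref{thm34}), so that the minimum below is attained and
$$\gamma(\mathbb{A})=\max_{1\le j\le n}\gamma_j(\mathbb{A}),\qquad \gamma_j(\mathbb{A})=\min\{k\ge 1:\ S_k(\mathbb{A},j)=[n]\}.$$
Next I would record the matrix analogue for the primitive matrix $M(\mathbb{A})$: a power $M(\mathbb{A})^k$ is positive if and only if each of its columns is positive, i.e. if and only if $S_k(M(\mathbb{A}),j)=[n]$ for every $j\in[n]$; hence, writing $\gamma_j(M(\mathbb{A}))=\min\{k\ge 1:\ S_k(M(\mathbb{A}),j)=[n]\}$ (the minimum being attained because $M(\mathbb{A})$ is primitive), one gets $\gamma(M(\mathbb{A}))=\max_{1\le j\le n}\gamma_j(M(\mathbb{A}))$.

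Then I would invoke Lemma \ref{lem42}: because $\mathbb{A}$ satisfies $a_{ii_2\cdots i_m}=0$ whenever $i_2\cdots i_m\ne i_2\cdots i_2$, we have $S_k(\mathbb{A},j)=S_k(M(\mathbb{A}),j)$ for all positive integers $k$ and all $j\in[n]$. In particular, for each fixed $j$ the two families of sets reach $[n]$ for exactly the same values of $k$, so $\gamma_j(\mathbb{A})=\gamma_j(M(\mathbb{A}))$. Taking the maximum over $j\in[n]$ and comparing the two displayed formulas yields $\gamma(\mathbb{A})=\gamma(M(\mathbb{A}))$, which is the assertion.

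I do not expect a genuine obstacle here; the whole content is carried by Lemma \ref{lem42}, and this lemma is essentially a bookkeeping consequence of it. The only points that deserve a line of care are: that the minima defining the various $\gamma_j$ are actually attained, which follows from the stabilisation statement Lemma \ref{lem31}(ii) for $\mathbb{A}$ and from its evident matrix counterpart (in a primitive matrix every row is nonzero, so once a column of some power is positive it stays positive for all larger exponents), combined with the primitivity hypotheses on $\mathbb{A}$ and on $M(\mathbb{A})$; and that the column-wise characterisation of $\gamma(M(\mathbb{A}))$ coincides with the usual definition as the least $k$ with $M(\mathbb{A})^k>0$, which is immediate. Both are routine, so the proof is short.
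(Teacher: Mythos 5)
Your proof is correct. The paper itself gives no proof of this lemma --- it is quoted from \cite{YHY13} (Corollary 3.4) --- so there is no in-paper argument to compare against; your derivation, combining Lemma \ref{lem42} with $\gamma(\mathbb{A})=\max_{1\le j\le n}\gamma_j(\mathbb{A})$, $\gamma_j(\mathbb{A})=\min\{k:\ S_k(\mathbb{A},j)=[n]\}$ and the column-wise description of the matrix exponent, is sound, and the two points you single out (attainment and monotonicity of the minima) are indeed the only ones requiring the brief justifications you supply.
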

Let $M_1=\left(
                 \begin{array}{ccccc}
                   0 & 0  & \cdots & 1 & 1 \\
                   1 & 0   & \cdots &  0&  0\\
                    0 & 1  & \cdots &  0& 0  \\
                    0 & 0 & \ddots &  0&0  \\
                   0 &0 & \cdots & 1 & 0 \\
                 \end{array}
               \right)
$. It is well known that $M_1$ is primitive, and $\gamma(M_1)=(n-1)^2+1$.

\vskip-.7cm
$$
        \hskip1cm
 \xy 0;/r3pc/: \POS (1,1) *\xycircle<3pc,3pc>{};
        \POS(1,2) *@{*}*+!D{n}="n";
        \POS(1.5,1.86) \ar@{->}(1.5,1.86);(1.6,1.8)="a";
        \POS(.5,1.86)   \ar@{->}(.5,1.86);(.6,1.91)="b";
        \POS(1.8,1.6)  *@{*}*+!L{\hspace*{3pt}{n\hspace*{-3pt}-\hspace*{-3pt}1}}="c";
        \POS(.2,1.6)   *@{*}*+!R{\mathrm{1}}="d";
        \POS(.2,.4)    \ar@{->}(.2,.4) ;(.19,.415) ="e";
        \POS(1.8,.4)   \ar@{->}(1.8,.4);(1.79,.385)="f";
        \POS "c" \ar @{->} (.7,1.6) \ar @{-} "d";
         \POS (0.015, .9) *@{*}*+!R{2}="g";
         \POS(.4,0.2) *@{*}*+!R{\mathrm{}}="k";
      \POS(.85,0.006) \ar@{->}(1,0.00);(0.85,.006)="r";
         \POS(1.6,0.2) *@{*}*+!R{\mathrm{}}="l";
           \POS(0.8,0.2) *@{*}*+!R{\mathrm{}}="m";
             \POS(1.0,0.2) *@{*}*+!R{\mathrm{}}="p";
               \POS(1.2,0.2) *@{*}*+!R{\mathrm{}}="q";

         \POS(2.0,.9)  *@{*}*+!L{\hspace*{3pt}{n\hspace*{-3pt}-\hspace*{-3pt}2}}="h";
        \POS(1.99,1.1) \ar@{->}(1.98,1.25);(1.99,1.1)="i";
        \POS(.01,1.1)   \ar@{->}(.01,1.1);(.023,1.25)="j";

 \endxy
 $$
 $$\mbox{ \qquad Figure 1.  digraph } \overleftarrow{D(M_1)}$$

Let  $\mathbb{A}_0$ be the nonnegative primitive  tensor with order $m$ and dimension $n$ such that
  $a_{ii_2\cdots i_m}=0$ if $i_2\cdots i_m\not=i_2\cdots i_2$  for any $i\in [n]$, and $M(\mathbb{A}_0)=M_1$.
   Then by Lemma \ref{lem42}, we have $S_k(\mathbb{A}_0, j)=S_k(M_1,j)$ for any $j\in [n]$ and any positive integer $k$,
   and  by Lemma \ref{lem43}, we have $\gamma(\mathbb{A}_0)=(n-1)^2+1$.

The following result is well known.

\begin{proposition}\label{pro44}
Let $a,b$ be positive integers,  if $a, b$ are coprime $(g.c.d.(a,b)=1)$,
then equation $ax+by=ab-a-b$ has no nonnegative integral solutions $(x, y)$.
\end{proposition}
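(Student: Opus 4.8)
The plan is to argue by contradiction and reduce the additive equation to a multiplicative divisibility statement that forces one of the variables to be negative. Suppose, for contradiction, that there exist nonnegative integers $x,y$ with $ax+by=ab-a-b$. Adding $a+b$ to both sides and setting $x'=x+1$, $y'=y+1$, this becomes $ax'+by'=ab$ with $x'\ge 1$ and $y'\ge 1$. So it suffices to show that $ax'+by'=ab$ has no solution in integers with $x'\ge 1$ and $y'\ge 1$.

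Next I would exploit coprimality. Rewriting $ax'+by'=ab$ as $ax'=b(a-y')$ and using $\gcd(a,b)=1$ together with Euclid's lemma (equivalently, B\'ezout's identity), it follows that $b\mid x'$, say $x'=bt$ for some integer $t$; since $x'\ge 1$ we have $t\ge 1$. Substituting back yields $abt+by'=ab$, and dividing by $b$ gives $at+y'=a$, i.e.\ $y'=a(1-t)$.

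Finally I would derive the contradiction: since $y'\ge 1>0$ we would need $1-t>0$, i.e.\ $t\le 0$, contradicting $t\ge 1$. Hence no nonnegative integral solution $(x,y)$ of $ax+by=ab-a-b$ exists. The only point requiring any care is the divisibility step $b\mid x'$, which is exactly where the hypothesis $\gcd(a,b)=1$ is used; everything else is elementary algebraic manipulation, so there is no real obstacle — this is the classical fact that $ab-a-b$ is not representable by $ax+by$ with $x,y\ge 0$ (indeed it is the Frobenius number of $a,b$), recorded here to support the exponent constructions used later in the proof of Theorem \ref{thm12}.
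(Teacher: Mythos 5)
Your proof is correct. Note that the paper itself offers no proof of Proposition \ref{pro44}: it is simply stated with the remark that it is well known (it is the classical fact that $ab-a-b$ is the Frobenius number of coprime $a,b$), so there is no argument in the paper to compare yours against. Your verification is complete and sound: the substitution $x'=x+1$, $y'=y+1$ turns the equation into $ax'+by'=ab$ with $x',y'\ge 1$; the step $b\mid x'$ follows correctly from $ax'=b(a-y')$ and $\gcd(a,b)=1$ via Euclid's lemma; and the final reduction $y'=a(1-t)$ with $t\ge 1$ gives the desired contradiction. This is a standard and perfectly adequate way to supply the omitted proof.
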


  \begin{proposition}\label{pro45}
  Let  $\mathbb{A}_0$ be the nonnegative primitive  tensor with order $m$ and dimension $n$ defined as above. Then
   $\gamma_{n-1}(\mathbb{A}_0)=n^2-3n+3$.
   \end{proposition}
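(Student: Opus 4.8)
The plan is to translate the statement into a question about walk lengths in the digraph $\overleftarrow{D(M_1)}$ and then answer it with the Frobenius-type fact recorded in Proposition~\ref{pro44}. By Lemma~\ref{lem42} and Remark~\ref{rem32}, $\gamma_{n-1}(\mathbb{A}_0)=\gamma_{n-1}(M_1)$ is the least positive integer $k$ with $S_k(M_1,n-1)=[n]$, which by Proposition~\ref{pro41} is the least $k$ such that in $\overleftarrow{D(M_1)}$ there is a walk of length $k$ from the vertex $n-1$ to \emph{every} vertex of $[n]$. From the shape of $M_1$, the digraph $\overleftarrow{D(M_1)}$ (Figure~1) has arc set $\{(1,2),(2,3),\ldots,(n-1,n)\}\cup\{(n-1,1),(n,1)\}$; its only cycles are the $(n-1)$-cycle $1\to2\to\cdots\to n-1\to1$ and the $n$-cycle $1\to2\to\cdots\to n\to1$, both passing through the vertex $n-1$ (the unique vertex of out-degree~$2$).

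First I would describe the walks starting at $n-1$ arithmetically. Cutting a closed walk at $n-1$ at its successive visits to $n-1$ writes it as a concatenation of excursions, and because from any vertex other than $n-1$ the next step is forced, each excursion has length $n-1$ (the route $n-1\to1\to2\to\cdots\to n-1$) or $n$ (the route $n-1\to n\to1\to2\to\cdots\to n-1$); hence the lengths of closed walks at $n-1$ form exactly $W:=\{(n-1)a+nb:\ a,b\text{ nonnegative integers}\}$. Appending the appropriate terminal segment then gives: the lengths of walks from $n-1$ to $n$ are exactly $1+W$ (the last arc must be $(n-1,n)$); the lengths of walks from $n-1$ to $1$ are $(1+W)\cup(2+W)$; and for $2\le j\le n-1$ the lengths of walks from $n-1$ to $j$ are $(j+W)\cup(j+1+W)$. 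Thus $\gamma_{n-1}(\mathbb{A}_0)$ is the least $k\ge1$ lying simultaneously in $W$, in $1+W$, in $(1+W)\cup(2+W)$, and in $(j+W)\cup(j+1+W)$ for every $2\le j\le n-2$.

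For the lower bound I would single out the target vertex $u=n$: since $n-1$ and $n$ are coprime, Proposition~\ref{pro44} gives $(n-1)n-(n-1)-n=n^2-3n+1\notin W$, hence $n^2-3n+2\notin1+W$, so there is no walk of length $n^2-3n+2$ from $n-1$ to $n$, and therefore $S_{n^2-3n+2}(M_1,n-1)\ne[n]$. Since $S_k(\mathbb{A}_0,n-1)=S_k(M_1,n-1)$ and, by Lemma~\ref{lem31}(ii), $S_k(\mathbb{A}_0,n-1)=[n]$ for every $k\ge\gamma_{n-1}(\mathbb{A}_0)$, this forces $\gamma_{n-1}(\mathbb{A}_0)\ge n^2-3n+3$.

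For the upper bound I would check that $k_0:=n^2-3n+3$ already works, by writing down explicit representations in $W$: $k_0=(n-3)(n-1)+n$ settles $u=n-1$; $k_0-1=n^2-3n+2=(n-2)(n-1)$ settles $u=n$ and $u=1$; for $3\le j\le n-1$, putting $i=j-2$ (so $1\le i\le n-3$) one has $k_0-j=n^2-3n+1-i=(n-2-i)n+(i-1)(n-1)$ with both coefficients nonnegative, which settles $u=j$; and finally, although $k_0-2=n^2-3n+1\notin W$, one has $k_0-3=n^2-3n=(n-3)n\in W$, which settles $u=2$. Hence $S_{k_0}(M_1,n-1)=[n]$, so $\gamma_{n-1}(\mathbb{A}_0)\le n^2-3n+3$, and with the lower bound we get $\gamma_{n-1}(\mathbb{A}_0)=n^2-3n+3$. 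I expect the main obstacle to be identifying the right binding vertex: the naive candidate $u=n-1$ only produces the weaker obstruction $n^2-3n+1$, and it is $u=n$ (reached only at lengths $1+W$) that pushes the exponent up by one more; once that is seen, the remaining work is to confirm reachability of the ``middle'' vertices $j$ at the critical length $k_0$, which requires using both residues $k_0-j$ and $k_0-j-1$ and a little bookkeeping with membership in $W$.
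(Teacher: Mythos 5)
Your proposal is correct and follows essentially the same route as the paper: both reduce $\gamma_{n-1}(\mathbb{A}_0)$ to walk lengths from $n-1$ in $\overleftarrow{D(M_1)}$, obtain the lower bound from the non-representability of $n^2-3n+1=n(n-1)-n-(n-1)$ as $na+(n-1)b$ (the target vertex $n$ being the binding one), and prove the upper bound with the same explicit cycle combinations $l+(l-3)C_{n-1}+(n-l)C_n$. Your excursion decomposition merely makes explicit the ``walk $=$ path $+$ cycles'' claim that the paper asserts without detail.
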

   \begin{proof}
  By Remark \ref{rem32}, Proposition \ref{pro41} and Lemma \ref{lem42},  we know

  \noindent\hskip.4cm $\gamma_{n-1}(\mathbb{A}_0)=\min\{k\hskip.08cm |\hskip.08cm S_k(\mathbb{A}_0,n-1)=[n]\}$

 \noindent $=\min\{k\hskip.08cm |\hskip.08cm S_k(M_1,n-1)=[n]\}$

\noindent $=\min\{k\hskip.08cm |\hskip0.08cm \mbox{there exists a walk of length $k$ from $n-1$ to $u$ in  } \overleftarrow{D(M_1)} \mbox { for all } u\in [n]\}$.

Let $W$ be any  walk of length $n^2-3n+2$ from vertex $n-1$ to vertex $n$  in $\overleftarrow{D(M_1)}$.
Then $W$ is a ¡°union¡± of the unique path $P$ from $n-1$ to $n$ (of length 1) and several cycles of length $n-1$ and several cycles of length $n$.
Let $l(W)$ be the length of $W$. Then there exist two nonnegative integer $a,b$ such that
 $$n^2-3n+2=l(W)=1+na+(n-1)b, \hskip0.1cm a\geq0,\hskip0.1cm b\geq 0.$$

Note that $n, n-1$ are coprime, by Proposition \ref{pro44}, we know that equation $n^2-3n+1=nx+(n-1)y$ has no nonnegative integral solutions $(x, y)$. It is a contradiction.
Therefore there does not exist a walk of length $n^2-3n+2$ from $n-1$ to $n$ in $\overleftarrow{D(M_1)}$,
it implies  $S_{n^2-3n+2}(M_1,n-1)\not=[n]$ and thus $\gamma_{n-1}(\mathbb{A}_0)>n^2-3n+2$.

Let $u\in [n]$ be any vertex in $\overleftarrow{D(M_1)}$, $P$ be the path of length $l = l(P)$ from vertex $n-1$ to vertex $u$, then
$$l=\left\{\begin{array}{ll}
     0, & \mbox { if } u=n-1; \\
     1, & \mbox { if } u=n; \\
     u \mbox { or } u+1,  & \mbox { if } u=1, 2,  \ldots, n-2.
   \end{array}\right.
$$

Let $C_{n-1}$ and $C_n$ be the cycles of length $n-1$ and $n$ in $\overleftarrow{D(M_1)}$, respectively. Take
$$W=\left\{\begin{array}{ll}
     (n-3)C_{n-1}+C_n, & \mbox { if } u=n-1; \\
     1+(n-2)C_{n-1}, & \mbox { if } u=1 \mbox { or } u=n; \\
     l+(l-3)C_{n-1}+(n-l)C_n,  & \mbox { if } u=2, 3, \ldots, n-2, \mbox { where } 3\leq l\leq n-1.
   \end{array}\right.
$$

Clearly, the length of $W$, $l(W)=n^2-3n+3$.
Therefore there exists a walk of length $n^2-3n+3$ from $n-1$ to any $u\in [n]$ in $\overleftarrow{D(M_1)}$,
it implies  $S_{n^2-3n+3}(M_1,n-1)=[n]$ and thus $\gamma_{n-1}(\mathbb{A}_0)\leq n^2-3n+3$.

Combining the above arguments, we have $\gamma_{n-1}(\mathbb{A}_0)= n^2-3n+3$.
   \end{proof}

\begin{remark}\label{rem46}
Similar to the proof of Proposition \ref{pro45}, take
$$W=\left\{\begin{array}{ll}
     (n-2)C_{n-1}, & \mbox { if } u=n-1; \\
     l+(l-2)C_{n-1}+(n-l-1)C_n,  & \mbox { if } u=1, 2, \ldots, n-2, \mbox { where } 2\leq l\leq n-1.
   \end{array}\right.
$$
We can show that there  exists a walk of length $n^2-3n+2$ from $n-1$ to any $u\in [n-1]$ in $\overleftarrow{D(M_1)}$,
it implies  that $S_{n^2-3n+2}(M_1,n-1)=[n-1]$  and  there does not exist a walk of length $n^2-3n+2$ from $n-1$ to $n$ in $\overleftarrow{D(M_1)}$.
\end{remark}

Note that $S_1( \mathbb{A}_0, n-1)=\{1,n\}$, then for any  positive integer $k$ with $1 \le k \le n^2-3n+2$,
we can show that $2\le |S_k(\mathbb{A}_0, n-1 )|=|S_k(M_1, n-1 )|\leq n-1$ by Proposition \ref{pro41}, Proposition \ref{pro45} and Remark \ref{rem46}.

  \begin{proposition}\label{pro47}{\rm (\cite{Sh12} Corollary 4.1 )}
   Let $\mathbb{A}$  be a nonnegative tensor with order $m$ and dimension $n$. If $M(\mathbb{A})$ is primitive, then $\mathbb{A}$ is also primitive.
   \end{proposition}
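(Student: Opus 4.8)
The plan is to show that the ``reachable sets'' $S_k(\mathbb{A},j)$ of the tensor always contain the corresponding reachable sets of its majorization matrix, and then feed in the primitivity of $M(\mathbb{A})$. Write $M=M(\mathbb{A})$, so that $M_{uv}=a_{uv\cdots v}$ for all $u,v\in[n]$. The core step I would carry out is an induction on $k$ proving that, for every $j\in[n]$ and every positive integer $k$,
$$\{\,u\in[n]\mid (M^k)_{uj}>0\,\}\ \subseteq\ S_k(\mathbb{A},j).$$
The base case $k=1$ is just the definition $S_1(\mathbb{A},j)=\{u\mid (M(\mathbb{A}))_{uj}>0\}$. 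For the inductive step, if $(M^{k+1})_{uj}>0$ then some $v\in[n]$ satisfies $M_{uv}>0$ and $(M^k)_{vj}>0$; by the inductive hypothesis $v\in S_k(\mathbb{A},j)$, and since $a_{uv\cdots v}=M_{uv}>0$, taking $j_2=\cdots=j_m=v$ in the description (\ref{eq32}) of $S_{k+1}(\mathbb{A},j)$ gives $u\in S_{k+1}(\mathbb{A},j)$. Equivalently, one keeps only the ``diagonal'' terms $a_{uv\cdots v}\big((M(\mathbb{A}^k))_{vj}\big)^{m-1}$ in the sum (\ref{eq31}) for $(M(\mathbb{A}^{k+1}))_{uj}$ and discards the remaining nonnegative contributions.

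With the inclusion in hand, I would finish as follows. Since $M=M(\mathbb{A})$ is a primitive matrix, there is a positive integer $r$ with $M^r>0$, i.e. $(M^r)_{uj}>0$ for all $u,j\in[n]$; the inclusion then forces $S_r(\mathbb{A},j)=[n]$ for every $j\in[n]$, which says exactly that $M(\mathbb{A}^r)>0$. By Proposition \ref{pro23}, $\mathbb{A}$ is primitive, which is the claim.

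I do not anticipate a genuine obstacle here; the one point worth being careful about is that the inclusion above cannot in general be upgraded to the equality $S_k(\mathbb{A},j)=S_k(M(\mathbb{A}),j)$ without the extra hypothesis of Lemma \ref{lem42} (vanishing of all entries $a_{ui_2\cdots i_m}$ with non-constant $i_2\cdots i_m$), but the one-sided inclusion is all that the statement requires. If one prefers, the same argument can be recast via Theorem \ref{thm34} using the vectors $x^{(k)}_j$, or in the digraph language of Proposition \ref{pro41} as ``every vertex reachable from $j$ in $k$ steps in $\overleftarrow{D(M(\mathbb{A}))}$ lies in $S_k(\mathbb{A},j)$'', but the set-theoretic version above is the most economical.
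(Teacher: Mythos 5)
Your proof is correct. Note that the paper does not actually prove this proposition --- it is quoted from Shao's paper (\cite{Sh12}, Corollary 4.1) --- so there is no in-paper argument to compare against; your one-sided inclusion $\{u\mid (M^k)_{uj}>0\}\subseteq S_k(\mathbb{A},j)$, obtained by retaining only the diagonal terms $a_{uv\cdots v}\bigl((M(\mathbb{A}^k))_{vj}\bigr)^{m-1}$ in (\ref{eq31}), is exactly the right mechanism, and combined with Proposition \ref{pro23} it yields the claim cleanly. Your closing remark is also well taken: the equality $S_k(\mathbb{A},j)=S_k(M(\mathbb{A}),j)$ genuinely requires the extra vanishing hypothesis of Lemma \ref{lem42}, and the inclusion is all that primitivity of $M(\mathbb{A})$ needs.
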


 \begin{proposition}\label{pro48}
 Let  $m, n, k$ be positive integers with $m\ge n\ge 3$ and $1 \le k \le n^2-3n+2$,
$\mathbb{A}_k$  be  the nonnegative  tensor with order $m$ and dimension $n$ such that
 for any $i\in[n]$,
  $$(\mathbb{A}_k)_{ii_2\cdots i_m}=\left\{\begin{array}{ll}
                                            1, & \mbox { if } \{i_2, \ldots, i_m\}=S_k(\mathbb{A}, n-1);  \\
                                            (\mathbb{A}_0)_{ii_2\cdots i_m}, & otherwise.
                                          \end{array}\right.$$
Then  $M(\mathbb{A}_k)=M(\mathbb{A}_0)=M_1$ and $\mathbb{A}_k$ is  primitive.
    \end{proposition}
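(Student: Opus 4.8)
The plan is to verify the two assertions separately, the first by a direct entry comparison and the second by invoking Proposition \ref{pro47}. First I would check that $M(\mathbb{A}_k)=M_1$. By definition of the majorization matrix, $(M(\mathbb{A}_k))_{ij}=(\mathbb{A}_k)_{ij\cdots j}$, so I only need to compare $(\mathbb{A}_k)_{ij\cdots j}$ with $(\mathbb{A}_0)_{ij\cdots j}=(M_1)_{ij}$ for all $i,j\in[n]$. The entry $(\mathbb{A}_k)_{ij\cdots j}$ differs from $(\mathbb{A}_0)_{ij\cdots j}$ only when $\{j,\ldots,j\}=\{j\}=S_k(\mathbb{A}_0,n-1)$, i.e.\ only when $S_k(\mathbb{A}_0,n-1)$ is the singleton $\{j\}$. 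But we observed just before the statement that $2\le |S_k(\mathbb{A}_0,n-1)|\le n-1$ for every $k$ with $1\le k\le n^2-3n+2$ (this uses Proposition \ref{pro45} and Remark \ref{rem46}, together with $S_1(\mathbb{A}_0,n-1)=\{1,n\}$), so $S_k(\mathbb{A}_0,n-1)$ is never a singleton in the allowed range of $k$. Hence the ``otherwise'' branch always applies along the diagonal-type indices $i_2=\cdots=i_m=j$, giving $(M(\mathbb{A}_k))_{ij}=(M(\mathbb{A}_0))_{ij}=(M_1)_{ij}$ for all $i,j$, so $M(\mathbb{A}_k)=M(\mathbb{A}_0)=M_1$.

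Second, since $M_1$ is primitive (a stated fact, with $\gamma(M_1)=(n-1)^2+1$), Proposition \ref{pro47} applies verbatim: if $M(\mathbb{A})$ is primitive then $\mathbb{A}$ is primitive. Taking $\mathbb{A}=\mathbb{A}_k$ and using $M(\mathbb{A}_k)=M_1$ from the previous paragraph, we conclude that $\mathbb{A}_k$ is primitive. This completes both claims.

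I expect the only subtle point to be the first paragraph: one must be careful that modifying the entries of $\mathbb{A}_0$ on the index set $\{i_2,\ldots,i_m\}=S_k(\mathbb{A}_0,n-1)$ does not accidentally alter the majorization matrix. The safeguard is precisely the size bound $2\le|S_k(\mathbb{A}_0,n-1)|\le n-1$, which guarantees that the set $S_k(\mathbb{A}_0,n-1)$, having at least two elements, is never of the form $\{j\}$ and so never coincides with the multiset $\{j,\ldots,j\}$ that defines a diagonal-type entry $(M(\mathbb{A}_k))_{ij}$. (Note also that $\mathbb{A}_k$ is well-defined and nonnegative: the modified entries are set to $1$, and all other entries equal those of the nonnegative tensor $\mathbb{A}_0$.) Everything else is a direct appeal to results already established in the excerpt, so there is no genuine obstacle beyond this bookkeeping.
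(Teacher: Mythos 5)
Your proof is correct and follows essentially the same route as the paper: both arguments use the bound $2\le |S_k(\mathbb{A}_0,n-1)|\le n-1$ to see that the modified entries never occur at diagonal-type indices $i_2=\cdots=i_m=j$, so $M(\mathbb{A}_k)=M(\mathbb{A}_0)=M_1$, and then invoke Proposition \ref{pro47} together with the primitivity of $M_1$. The paper additionally remarks that $m\ge n$ is what makes the modification nonvacuous (there must exist tuples $(i_2,\ldots,i_m)$ whose underlying set equals $S_k(\mathbb{A}_0,n-1)$), a point worth noting but not a gap in your argument.
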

    \begin{proof}
  Clearly, $\mathbb{A}_k$ is well-defined since $m\ge n$ and $|S_k(\mathbb{A}, n-1)|\le n-1$.

Since $|S_k(\mathbb{A}, n-1)|\ge 2$ for any $k$,
then $i_2\ldots i_m\not=i_2\ldots i_2$ when $\{i_2, \ldots, i_m\}=S_k(\mathbb{A}, n-1)$,
and thus $M(\mathbb{A}_k)=M(\mathbb{A}_0)=M_1$ by the definition of the majorization matrix of a tensor.

Note that $M_1$ is primitive, then  $\mathbb{A}_k$ is  primitive by Proposition \ref{pro47}.
\end{proof}

\begin{proposition}\label{pro49}
Let $k$ be positive integer with $1\leq k\le n^2-3n+2$. Then
 $S_1(\mathbb{A}_0, n-1), S_2(\mathbb{A}_0, n-1), \ldots, S_k(\mathbb{A}_0, n-1)$
are pairwise distinct proper subsets of $[n]$.
\end{proposition}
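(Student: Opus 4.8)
The plan is to reduce everything to two facts already in hand: the exact value $\gamma_{n-1}(\mathbb{A}_0)=n^2-3n+3$ from Proposition~\ref{pro45}, and the shift-invariance of the sets $S_k$ proved in Lemma~\ref{lem31}(i). First I would dispose of the word ``proper'': by Proposition~\ref{pro45} together with Remark~\ref{rem32}, $n^2-3n+3$ is the \emph{smallest} $k$ with $S_k(\mathbb{A}_0,n-1)=[n]$, so for every $k$ in the range $1\le k\le n^2-3n+2$ the set $S_k(\mathbb{A}_0,n-1)$ is a proper subset of $[n]$ (one could equally invoke the bound $2\le|S_k(\mathbb{A}_0,n-1)|\le n-1$ recorded in the paragraph after Remark~\ref{rem46}).

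For pairwise distinctness I would argue by contradiction. Suppose $S_s(\mathbb{A}_0,n-1)=S_t(\mathbb{A}_0,n-1)$ for some integers $1\le s<t\le n^2-3n+2$, and set $p=t-s\ge 1$. Applying Lemma~\ref{lem31}(i) to $\mathbb{A}_0$ with both column indices equal to $n-1$ and with $r=jp$, one obtains $S_{s+jp}(\mathbb{A}_0,n-1)=S_{t+jp}(\mathbb{A}_0,n-1)=S_{s+(j+1)p}(\mathbb{A}_0,n-1)$ for every $j\ge 0$, and inductively $S_s(\mathbb{A}_0,n-1)=S_{s+\ell p}(\mathbb{A}_0,n-1)$ for all $\ell\ge 0$. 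Now pick $\ell$ large enough that $s+\ell p\ge n^2-3n+3=\gamma_{n-1}(\mathbb{A}_0)$; by Remark~\ref{rem32} and Lemma~\ref{lem31}(ii) the right-hand side then equals $[n]$, hence $S_s(\mathbb{A}_0,n-1)=[n]$. This contradicts the first paragraph, since $s\le n^2-3n+2<\gamma_{n-1}(\mathbb{A}_0)$. Therefore $S_1(\mathbb{A}_0,n-1),\ldots,S_k(\mathbb{A}_0,n-1)$ are pairwise distinct, and combined with the first paragraph they are pairwise distinct proper subsets of $[n]$, as claimed.

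There is essentially no computational obstacle here; the only point that needs care is the bookkeeping in the periodicity step — one must check that a single repetition $S_s=S_t$ occurring \emph{before} the critical index $\gamma_{n-1}(\mathbb{A}_0)$ genuinely propagates, via Lemma~\ref{lem31}(i), into a persistent period-$p$ pattern that survives past that index, where Lemma~\ref{lem31}(ii) pins its value to $[n]$ and produces the contradiction. Everything else is a direct appeal to Proposition~\ref{pro45}, Remark~\ref{rem32}, and Lemma~\ref{lem31}.
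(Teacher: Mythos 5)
Your proposal is correct and follows essentially the same route as the paper: properness from the bound $2\le|S_k(\mathbb{A}_0,n-1)|\le n-1$ (equivalently the minimality of $\gamma_{n-1}(\mathbb{A}_0)=n^2-3n+3$), and distinctness by propagating a repetition $S_s=S_t$ forward via Lemma~\ref{lem31}(i) until it collides with the known value of $\gamma_{n-1}(\mathbb{A}_0)$. The only cosmetic difference is that the paper shifts once to land exactly at index $n^2-3n+2$ and contradicts $S_{n^2-3n+2}(M_1,n-1)=[n-1]$ from Remark~\ref{rem46}, whereas you iterate the period past the critical index and contradict properness of $S_s$ directly; both are valid.
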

\begin{proof}
Clearly, $S_1(\mathbb{A}_0, n-1), S_2(\mathbb{A}_0, n-1), \ldots, S_k(\mathbb{A}_0, n-1)$
are proper subsets of $[n]$ by $2\le |S_p(\mathbb{A}_0, n-1 )|\leq n-1$ for all $p\in [k]$.

If $S_p(\mathbb{A}_0, n-1)= S_q(\mathbb{A}_0, n-1)$ for $1\leq p<q\le k$,
then by (i), (ii) of Lemma \ref{lem31} and Proposition \ref{pro45},
we have $$S_{n^2-3n+2}(\mathbb{A}_0, n-1)= S_{q+(n^2-3n+2-p)}(\mathbb{A}_0, n-1)=S_{n^2-3n+3}(\mathbb{A}_0, n-1)=[n],$$
but $S_{n^2-3n+2}(\mathbb{A}_0, n-1)=[n-1]\not=[n]$ by Remark \ref{rem46}, it is a contradiction.
Thus $S_1(\mathbb{A}_0, n-1), S_2(\mathbb{A}_0, n-1), \ldots, S_k(\mathbb{A}_0, n-1)$
are pairwise distinct.
\end{proof}

\begin{theorem}\label{thm410}
Let  $m, n, k$ be positive integers with $m\ge n\ge 3$ and $1 \le k \le n^2-3n+2$,
 $\mathbb{A}_k$ be the nonnegative primitive  tensor defined as above.
 Then $\gamma(\mathbb{A}_k)=\gamma_{n}(\mathbb{A}_k)=n+k$.\end{theorem}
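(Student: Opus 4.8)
The plan is to compute $\gamma_j(\mathbb{A}_k)$ for every $j\in[n]$ by tracking the sets $S_l(\mathbb{A}_k,j)$ of Section 3 and comparing them with those attached to the matrix $M_1$, and then to invoke Proposition \ref{pro27}. Write $T:=S_k(\mathbb{A}_0,n-1)$ for the set used to define $\mathbb{A}_k$; by Lemma \ref{lem42} one has $T=S_k(M_1,n-1)$, and by the discussion following Remark \ref{rem46}, $2\le|T|\le n-1$, so in particular $|T|\le m-1$.

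First I would record a dichotomy, read off from (\ref{eq32}) and the definition of $\mathbb{A}_k$. Since $(\mathbb{A}_k)_{u i_2\cdots i_m}=1$ for \emph{every} $u\in[n]$ whenever $\{i_2,\dots,i_m\}=T$, if $S_l(\mathbb{A}_k,j)\supseteq T$ then, listing the elements of $T$ as $i_2,\dots,i_m$ (possible as $|T|\le m-1$), we get $S_{l+1}(\mathbb{A}_k,j)=[n]$. Conversely, if $S_l(\mathbb{A}_k,j)\not\supseteq T$ then no index tuple contained in $S_l(\mathbb{A}_k,j)$ has underlying set $T$, so only the entries that $\mathbb{A}_k$ and $\mathbb{A}_0$ share can contribute to $S_{l+1}(\mathbb{A}_k,j)$; hence if in addition $S_l(\mathbb{A}_k,j)=S_l(M_1,j)$, then $S_{l+1}(\mathbb{A}_k,j)=S_{l+1}(\mathbb{A}_0,j)=S_{l+1}(M_1,j)$ by Lemma \ref{lem42}. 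Since $S_1(\mathbb{A}_k,j)=S_1(M_1,j)$ (because $M(\mathbb{A}_k)=M_1$), induction gives $S_l(\mathbb{A}_k,j)=S_l(M_1,j)$ for all $1\le l\le l_0(j)$, where $l_0(j):=\min\{l\ge1:\ S_l(M_1,j)\supseteq T\}$, together with $S_{l_0(j)+1}(\mathbb{A}_k,j)=[n]$. By Remark \ref{rem32} this means $\gamma_j(\mathbb{A}_k)=l_0(j)+1$ whenever $S_{l_0(j)}(M_1,j)\ne[n]$, which reduces everything to understanding the numbers $l_0(j)$ inside the digraph $\overleftarrow{D(M_1)}$ of Figure 1.

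Next, the case $j=n$. The vertex $n$ has the single out-arc $n\to1$ and $1\to2\to\cdots\to n-1$ is a forced path, so $S_l(M_1,n)$ is a singleton for $1\le l\le n-1$ with $S_{n-1}(M_1,n)=\{n-1\}$; consequently $S_{n-1+p}(M_1,n)=S_p(M_1,n-1)$ for all $p\ge1$. In particular $S_{n-1+k}(M_1,n)=S_k(M_1,n-1)=T$, so $l_0(n)\le n-1+k$, and since every $S_l(M_1,n)$ with $l\le n-1+k$ is either a singleton or one of $S_1(M_1,n-1),\dots,S_k(M_1,n-1)$ --- all of cardinality $\le n-1<n$ --- we also get $S_{l_0(n)}(M_1,n)\ne[n]$. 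The hard part is the matching lower bound $l_0(n)\ge n-1+k$, that is, $S_l(M_1,n)\not\supseteq T$ for $l\le n-2+k$: for $l\le n-2$ this is immediate since $S_l(M_1,n)$ is a singleton and $|T|\ge2$, while for $n-1\le l\le n-2+k$, writing $l=n-1+p$ with $1\le p\le k-1$, it is exactly the assertion that $S_p(M_1,n-1)\not\supseteq S_k(M_1,n-1)$ for $1\le p\le k-1$. I would prove this by contradiction: if $S_p(M_1,n-1)\supseteq S_k(M_1,n-1)$ for some $p<k$, then, since the map $X\mapsto N^+_{\overleftarrow{D(M_1)}}(X)$ is monotone and sends $S_r(M_1,n-1)$ to $S_{r+1}(M_1,n-1)$, applying it in blocks of $k-p$ steps yields $S_p(M_1,n-1)\supseteq S_{p+i(k-p)}(M_1,n-1)$ for every $i\ge0$; as $M_1$ is primitive, the right-hand side equals $[n]$ once $p+i(k-p)$ is large enough, forcing $S_p(M_1,n-1)=[n]$, contradicting $|S_p(M_1,n-1)|\le n-1$ (valid because $p\le k-1\le n^2-3n+1<n^2-3n+2$). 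Hence $l_0(n)=n-1+k$ and, by the previous paragraph, $\gamma_n(\mathbb{A}_k)=n+k$.

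Finally, to conclude $\gamma(\mathbb{A}_k)=\max_{1\le j\le n}\gamma_j(\mathbb{A}_k)=\gamma_n(\mathbb{A}_k)$ via Proposition \ref{pro27}, I would bound the remaining $\gamma_j$. For $1\le j\le n-1$ the path $j\to j+1\to\cdots\to n-1$ is forced, so $S_{n-1-j}(M_1,j)=\{n-1\}$ (a zero-length walk when $j=n-1$), hence $S_{n-1-j+p}(M_1,j)=S_p(M_1,n-1)$ for $p\ge1$ and in particular $S_{n-1-j+k}(M_1,j)=T$; thus $l_0(j)\le n-1-j+k$, the set $S_{l_0(j)}(M_1,j)$ is again of cardinality $\le n-1$, and so $\gamma_j(\mathbb{A}_k)=l_0(j)+1\le n-j+k\le n-1+k<n+k$. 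Together with $\gamma_n(\mathbb{A}_k)=n+k$ this gives $\gamma(\mathbb{A}_k)=n+k=\gamma_n(\mathbb{A}_k)$, as claimed. The one genuinely delicate step is the lower bound for $l_0(n)$, i.e.\ excluding $S_p(M_1,n-1)\supseteq S_k(M_1,n-1)$ for $p<k$; everything else is routine bookkeeping with the orbits of the vertices $1$, $n-1$ and $n$ under $\overleftarrow{D(M_1)}$.
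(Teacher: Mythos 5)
Your argument is correct, and although its skeleton matches the paper's (compute the $j$-primitive degrees by tracking the sets $S_l(\mathbb{A}_k,j)$, use the forced-path structure of $\overleftarrow{D(M_1)}$ to shift every starting vertex back to $n-1$, and finish with Proposition \ref{pro27}), the key technical ingredient is genuinely different, and in fact stronger in a useful way. The paper runs its induction showing $S_t(\mathbb{A}_k,n-1)=S_t(\mathbb{A}_0,n-1)$ for $t\le k$ using only Proposition \ref{pro49}, i.e.\ that $S_1(\mathbb{A}_0,n-1),\dots,S_k(\mathbb{A}_0,n-1)$ are pairwise \emph{distinct}; but what that induction actually needs is that no tuple $i_2,\dots,i_m\in S_l(\mathbb{A}_0,n-1)$ has underlying set exactly $S_k(\mathbb{A}_0,n-1)$, i.e.\ the \emph{non-containment} $S_l\not\supseteq S_k$ for $l<k$ --- distinctness alone does not rule out $S_l\supsetneq S_k$. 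Your lemma establishes precisely this stronger fact, by a clean abstract argument (monotonicity of $X\mapsto N^+(X)$ together with primitivity forcing $S_p=[n]$ whenever $S_p\supseteq S_{p+(k-p)}$, contradicting $|S_p|\le n-1$ from Proposition \ref{pro45}); so your route simultaneously patches a small gap in the published induction and replaces the combinatorial bookkeeping of Remark \ref{rem46}/Proposition \ref{pro49} by a softer argument. The other difference is organizational: the paper computes each $\gamma_j(\mathbb{A}_k)$ exactly via the chain $S_1(\mathbb{A}_k,n-1)=S_2(\mathbb{A}_k,n-2)=\cdots=S_n(\mathbb{A}_k,n)$ and Lemma \ref{lem31}(i), whereas you compute only $\gamma_n(\mathbb{A}_k)=n+k$ exactly and bound the remaining $\gamma_j$ by $n-j+k<n+k$, which suffices for Proposition \ref{pro27}. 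One cosmetic slip: in your lower bound for $l_0(n)$ the value $l=n-1$ (i.e.\ $p=0$) falls between your two branches as written, but $S_{n-1}(M_1,n)=\{n-1\}$ is still a singleton, so the first branch's argument covers it.
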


\begin{proof}
Firstly,  we show that  $S_t(\mathbb{A}_k, n-1)=S_t(\mathbb{A}_0, n-1)$ for any positive integer $t\in [k]$ by induction on $t$.

By Proposition \ref{pro48}, we have  $S_1(\mathbb{A}_k, n-1)=\{u\in[n] \hskip.08cm | \hskip.08cm (M(\mathbb{A}_k))_{u, n-1}>0\}
                                  =\{u\in[n] \hskip.08cm | \hskip.08cm (M(\mathbb{A}_0))_{u, n-1}>0\}
                                  =S_1(\mathbb{A}_0, n-1)=\{1,n\}$. Then $t=1$ holds.

Assume that  $S_t(\mathbb{A}_k, n-1)=S_t(\mathbb{A}_0, n-1)$ for $t=l\geq 1$ holds. Then for $t=l+1\leq k $,
$S_l(\mathbb{A}_0, n-1)\not=S_k(\mathbb{A}_0, n-1)$ by Proposition \ref{pro49}.
If $i_2, \ldots, i_m\in S_l(\mathbb{A}_0, n-1)$,
we have $(\mathbb{A}_k)_{ui_2\ldots i_m}>0\Longleftrightarrow
(\mathbb{A}_k)_{ui_2\ldots i_m}=(\mathbb{A}_0)_{ui_2\ldots i_m}>0$ by the definition of $\mathbb{A}_k$. Thus by (\ref{eq32}),

$S_{l+1}(\mathbb{A}_k, n-1)
=\{u\in[n] \hskip.08cm | \hskip.08cm \mbox{there exist } i_2, \ldots, i_m\in S_l(\mathbb{A}_k, n-1) \mbox{ and } (\mathbb{A}_k)_{ui_2\ldots i_m}>0\}$

 \hskip2.85cm $ =\{u\in[n] \hskip.08cm | \hskip.08cm \mbox{there exist } i_2, \ldots, i_m\in S_l(\mathbb{A}_0, n-1) \mbox{ and } (\mathbb{A}_k)_{ui_2\ldots i_m}>0\}$

 \hskip2.85cm $ =\{u\in[n] \hskip.08cm | \hskip.08cm \mbox{there exist } i_2, \ldots, i_m\in S_l(\mathbb{A}_0, n-1) \mbox{ and } (\mathbb{A}_0)_{ui_2\ldots i_m}>0\}$

  \hskip2.85cm  $=S_{l+1}(\mathbb{A}_0, n-1)$.

\noindent It follows that $S_t(\mathbb{A}_k, n-1)=S_t(\mathbb{A}_0, n-1)$ for any positive integer $t$ with  $1\le t\le k$,
and thus $S_1(\mathbb{A}_k, n-1), S_2(\mathbb{A}_k, n-1), \ldots, S_k(\mathbb{A}_k, n-1)$
are pairwise distinct proper subsets of $[n]$ for $1\le k\le n^2-3n+2$ by Proposition \ref{pro49}.

Since $(\mathbb{A}_k)_{ii_2\cdots i_m}=1$   for any $i\in [n]$ when the set $\{i_2, \ldots, i_m\}=S_k(\mathbb{A}_k, n-1)$, we have
$$S_{k+1}(\mathbb{A}_k, n-1)=\{u\in[n]\hskip.08cm |\hskip.08cm \mbox{there exist } j_2, \ldots, j_m\in S_k(\mathbb{A}, n-1)
 \mbox{ and } (\mathbb{A}_k)_{uj_2\cdots j_m}>0\}=[n].$$
and thus $\gamma_{n-1}(\mathbb{A}_k)=k+1$ by Remark \ref{rem32}.

By the definition of $\mathbb{A}_k$, an easy computation shows that
$$S_1(\mathbb{A}_k, n-1)=S_2(\mathbb{A}_k, n-2)=S_3(\mathbb{A}_k, n-3)=\cdots =S_{n-1}(\mathbb{A}_k, 1)=S_n(\mathbb{A}_k, n).$$
By (i) of Lemma \ref{lem31} and the definition of $j$-primitive degree, we have

$S_1(\mathbb{A}_k, n-1)=S_2(\mathbb{A}_k, n-2)$

\noindent $\Longrightarrow
\left\{\begin{array}{ll}
   ([n]\not=)S_{1+r}(\mathbb{A}_k, n-1)=S_{2+r}(\mathbb{A}_k, n-2), & \mbox{ if } 1\le r\le k-1; \\
    ([n]=)S_{1+k}(\mathbb{A}_k, n-1)=S_{2+k}(\mathbb{A}_k, n-2), &     \mbox{ if } r=k.
      \end{array}\right.$

\noindent $\Longrightarrow \gamma_{n-2}(\mathbb{A}_k)=k+2$.

Similarly, we can prove $\gamma_{n-3}(\mathbb{A}_k)=k+3,  \ldots,  \gamma_{1}(\mathbb{A}_k)=n+k-1, \gamma_{n}(\mathbb{A}_k)=n+k.$
Then  following form Proposition \ref{pro27}, we have $\gamma(\mathbb{A}_k)=\max\limits_{1\le j\leq n}\gamma_{j}(\mathbb{A}_k)=\gamma_{n}(\mathbb{A}_k)=n+k$.\end{proof}

\noindent {\bf Proof of Theorem \ref{thm12}:}
Let $t$ be any positive integer with $1\le t\le (n-1)^2+1$,
we will show that there exists a nonnegative primitive tensor  $\mathbb{B}_t$  with order $m$ and dimension $n$ such that $\gamma(\mathbb{B}_t)=t$.
We complete the proof by the following two cases.

{\bf Case 1: }  $1\le t\le n$.

It is well known that there exists a primitive matrix $A_t$ of order $n$ such that $\gamma(A_t)=t$.
We define the tensor $\mathbb{B}_t$
to be  the nonnegative primitive  tensor with order $m$ and dimension $n$ such that
  $(\mathbb{B}_t)_{ii_2\cdots i_m}=0$    if $i_2\cdots i_m\not=i_2\cdots i_2$  for any $i\in [n]$, and $M(\mathbb{B}_t)=A_t$.
  Then $\gamma(\mathbb{B}_t)=\gamma(A_t)=t$ by Lemma \ref{lem43}.

 {\bf Case 2: } $n+1\le t\le (n-1)^2+1$.

 We choose $\mathbb{B}_t=\mathbb{A}_{t-n}$. Then $\gamma(\mathbb{B}_t)=n+(t-n)=t$ by Theorem \ref{thm410}. \qed



\end{document}